\numberwithin{equation}{section}
\newtheoremstyle{myremark}{10pt}{10pt}{}{}{\bfseries}{.}{.5em}{}
\newtheorem{theorem}{Theorem}[section]
\newtheorem{corollary}[theorem]{Corollary}
\newtheorem{lemma}[theorem]{Lemma}
\newtheorem{proposition}[theorem]{Proposition}
\theoremstyle{definition}
\begin{document}

\title{The Trudinger type inequality in fractional boundary Hardy inequality}

\author{ADIMURTHI, PROSENJIT ROY, AND VIVEK SAHU}

\address{ Department of Mathematics and Statistics,
Indian Institute of Technology Kanpur, Kanpur - 208016, Uttar Pradesh, India}

\email{Adimurthi: adiadimurthi@gmail.com, adimurthi@iitk.ac.in}
\email{Prosenjit Roy: prosenjit@iitk.ac.in}
\email{Vivek Sahu: viveksahu20@iitk.ac.in, viiveksahu@gmail.com}

\subjclass[2020]{ 46E35 (Primary); 26D15 (Secondary)}

\keywords{fractional boundary Hardy inequality; Trudinger-type inequality.}

\date{}

\dedicatory{}

\begin{abstract}
We establish Trudinger-type inequality in the context of fractional boundary Hardy-type inequality for the case ~$sp=d$, where ~$p>1, ~ s \in (0,1)$ on a bounded Lipschitz domain ~$\Omega \subset \mathbb{R}^d$. In particular, we establish fractional version of Trudinger-type inequality with an extra  singular function, namely $d$-th power of the distance  function from $\partial \Omega$ in the denominator of the integrand. The case $d=1$, as it falls in the category $sp=1$, becomes more delicate where  an extra logarithmic correction is required together with subtraction of an average term.  
\end{abstract}

\maketitle


\section{Introduction}

Our aim is to establish a Trudinger-type inequality in fractional boundary Hardy-type inequality for the case ~$sp=d$, with ~$s \in (0,1)$ and ~$p>1$. The classical Hardy-Sobolev inequality is given by: let ~$ p \geq 1, ~\Omega \subset \mathbb{R}^d, ~ d \geq 2$ be a bounded Lipschitz domain with ~$0 \in \Omega$, the following inequality:
\begin{equation*}
    \int_{\Omega} \frac{|u(x)|^{p}}{|x|^{p}} dx \leq \left( \frac{p}{d-p} \right)^{p} \int_{\Omega} |\nabla u(x)|^{p} dx,
\end{equation*}
holds for all ~$u \in C^{\infty}_{c}(\Omega)$ if ~$1 \leq p<d$.  The Hardy-Sobolev inequality with singularity on boundary of ~$\Omega$ for the local case (cf. ~\cite{lewis}) states that there exists a constant ~$C=C(d,p, \Omega)>0$ such that
\begin{equation}\label{boundary hardy}
      \int_{\Omega} \frac{|u(x)|^{p}}{\delta^{p}_{\Omega}(x)}  dx \leq C \int_{\Omega} |\nabla u(x) |^{p}dx, \hspace{.3cm}  \forall \ u \in C^{\infty}_{c}(\Omega),
\end{equation}
where ~$p>1$ and ~$\delta_{ \Omega}$ is the distance function from the boundary of ~$\Omega$ defined by
~$$ \delta_{\Omega}(x):= \underset{y \in \partial \Omega}{\min} |x-y|.$$
B. Dyda in ~\cite{dyda2004} extended ~\eqref{boundary hardy} to fractional Sobolev space and established the following result `among other results': let ~$\Omega \subset \mathbb{R}^d, ~ d \geq 2$ be a bounded Lipschitz domain, ~$p>1, ~ s \in (0,1) $ and ~$sp= d$, then there exists a constant $C=C(d,p, \Omega)>0$ such that
\begin{equation}\label{dyda}
    \int_{\Omega} \frac{|u(x)|^{p}}{\delta^{d}_{\Omega}(x)} dx \leq C \int_{\Omega} \int_{\Omega} \frac{|u(x)-u(y)|^{p}}{|x-y|^{2d}} dxdy =: [u]^{p}_{W^{s,p}(\Omega)}, \hspace{.3cm} \forall \ u \in C^{1}_{c}(\Omega).
\end{equation}
B. Dyda in ~\cite{dyda2004} also gives a counter-example that ~$\eqref{dyda}$ fails when ~$d=1$.  

\smallskip

We define $W^{s,p}_{0}(\Omega)$ as the completion of $C^{\infty}_{c}(\Omega)$ with respect to the norm  $ \|.\|_{W^{s,p}(\Omega)}$ (see ~\eqref{norm defn} for the definition of norm). We need the following  definitions to state our main result. Given any $n_{0} \in \mathbb{N}$, we define the following convex function:
\begin{equation}\label{notation}
   \Phi_{n_{0}}(t) := \exp(t) - \sum_{n=0}^{n_{0}-1} \frac{t^{n}}{n!}, \hspace{5mm} \forall \ t \in \mathbb{R}.
\end{equation}
Let ~$(u)_{\Omega}$ denotes the average of ~$u$ over ~$\Omega$ which is given by
\begin{equation*}
    (u)_{\Omega} := \frac{1}{|\Omega|} \int_{\Omega} u(y)dy.
\end{equation*}

The next theorem presents the main result of this article which establishes fractional version of weighted Trudinger-type inequality due to the presence of singular weight function ~$\frac{1}{\delta^{d}_{\Omega}(x)}$ in the integrand.

 \begin{theorem}\emph{[Main result]}\label{main result}
Let ~$\Omega$ be a bounded Lipschitz domain in $\mathbb{R}^{d}, ~ d \geq 1$ and $ \sup_{x \in \Omega} \delta_{\Omega}(x) = R>0$. Suppose ~$sp=d$ with $p>1$ and $s \in (0,1)$. Let $n_{0} \in \mathbb{N}$ be such that $n_{0}  \geq p -1 > n_{0}-1 $. Then there exists $\alpha_{0}>0$ such that
     \begin{multline}\label{main inequality}
       \sup \left\{  \int_{\Omega} \Phi_{n_{0}} \left( \alpha X_{d}(u)^{\frac{d}{d-s}} \right) \frac{dx}{\delta^{d}_{\Omega}(x)} \ \bigg| \ u \in W^{s,p}_{0}(\Omega), \ [u]_{W^{s,p}(\Omega)} \leq 1  \right\} < \infty, \\ \forall \ \alpha \in [0, \alpha_{0}),    
     \end{multline}
where 
\begin{equation}\label{condition on Xd}
    X_{d}(u) = \begin{dcases}
        \frac{|u(x)-(u)_{\Omega}|}{\ln \left( \frac{2R}{\delta_{\Omega}(x)} \right) } , & d=1 \\ 
        |u(x)|, & d \geq 2.
    \end{dcases}
\end{equation}    
Moreover,
     \begin{multline}\label{main ineq on alpha star}
       \sup \left\{  \int_{\Omega} \Phi_{n_{0}} \left( \alpha X_{d}(u)^{\frac{d}{d-s}} \right) \frac{dx}{\delta^{d}_{\Omega}(x)} \ \bigg| \ u \in W^{s,p}_{0}(\Omega), \ [u]_{W^{s,p}(\Omega)} \leq 1  \right\} = \infty, \\ \forall \ \alpha \in (\alpha^{*}_{d}, \infty),
     \end{multline}     
     where
         \begin{equation}\label{defn alpha star}
         \alpha^{*}_{d} := 
             d  \left(  \frac{2 (d \omega_{d})^{2} \Gamma (p+1) }{d!}  \sum_{n=0}^{\infty} \frac{(d+n-1)!}{n!} \frac{1}{(d+2n)^{p}} \right)^{\frac{s}{d-s}}.
         \end{equation}  
         Here, $\omega_{d}$ denotes the volume of $d$-dimensional unit ball.
 \end{theorem}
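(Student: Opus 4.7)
My strategy is to split the proof into three pieces: (i) a weighted Moser-type $L^q$ inequality which serves as the technical backbone, (ii) a Taylor expansion of $\Phi_{n_0}$ yielding the finiteness assertion \eqref{main inequality}, and (iii) a concentrating Moser-type sequence yielding the divergence assertion \eqref{main ineq on alpha star}. The case $d=1$ is genuinely harder because Dyda's boundary Hardy inequality \eqref{dyda} fails there, which is exactly the reason \eqref{condition on Xd} builds in a logarithmic weight and an average subtraction.

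\textbf{Weighted $L^q$ estimate and upper bound.} I would first prove, for every $q\geq p$ and $u\in W^{s,p}_0(\Omega)$, the key inequality
\begin{equation*}
    \int_\Omega X_d(u)^{q}\,\frac{dx}{\delta_\Omega^{d}(x)} \;\leq\; C_1\,C_2^{q}\,q^{\,q(d-s)/d}\,[u]_{W^{s,p}(\Omega)}^{q}.
\end{equation*}
For $d\geq 2$ this is obtained by substituting $v=|u|^{q/p}$ into \eqref{dyda} and controlling $[v]_{W^{s,p}}$ via the pointwise bound $\bigl||u(x)|^{q/p}-|u(y)|^{q/p}\bigr|^{p}\leq (q/p)^{p}(|u(x)|+|u(y)|)^{q-p}|u(x)-u(y)|^{p}$ followed by Hölder with exponents $q/(q-p)$ and $q/p$; the prefactor $(q/p)^p$ drives the $q^{q(d-s)/d}$ growth after iteration. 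For $d=1$ the same scheme is applied to $v=|u-(u)_\Omega|^{q/p}$, with \eqref{dyda} replaced by its logarithmic-weight/average-subtracted counterpart, which is the only form of boundary Hardy inequality available in one dimension. With this lemma in hand, expanding $\Phi_{n_0}(t)=\sum_{n\geq n_0}t^n/n!$ and applying it with $q_n=nd/(d-s)=np/(p-1)$ gives
\begin{equation*}
    \int_\Omega \Phi_{n_0}\!\bigl(\alpha X_d(u)^{d/(d-s)}\bigr)\frac{dx}{\delta_\Omega^d} \;\leq\; C_1\sum_{n\geq n_0}\frac{(C_2\alpha)^{n}q_n^{\,n}}{n!},
\end{equation*}
which by Stirling reduces to a geometric series in $\alpha C_2 e d/(d-s)$ and converges whenever $\alpha<\alpha_0 := (d-s)/(e d C_2)$, proving \eqref{main inequality}.

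\textbf{Moser concentration for the divergence.} Fix an interior point $x_0\in\Omega$ and $R_0>0$ with $B(x_0,R_0)\subset\Omega$, so that $\delta_\Omega^{-d}$ is bounded on that ball and the weight is harmless. For $\epsilon\in(0,R_0)$ take
\begin{equation*}
    u_\epsilon(x)=A\,\log^{-1/p}(R_0/\epsilon)\cdot \min\!\Bigl\{\log\tfrac{R_0}{|x-x_0|},\;\log\tfrac{R_0}{\epsilon}\Bigr\}_{+},
\end{equation*}
extended by zero outside $B(x_0,R_0)$. Rescaling $x=R_0\xi$, $y=R_0\eta$ converts the leading part of $[u_\epsilon]_{W^{s,p}}^p$ into $A^p$ times $\iint|\log(|\eta|/|\xi|)|^{p}|\xi-\eta|^{-2d}d\xi\,d\eta$ on $\{0<|\xi|,|\eta|<1\}$. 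Expanding $|\xi-\eta|^{-2d}$ in Gegenbauer polynomials and integrating the angular variables over $S^{d-1}\times S^{d-1}$ produces the explicit asymptotics
\begin{equation*}
    [u_\epsilon]_{W^{s,p}(\Omega)}^{p}\;\longrightarrow\; A^{p}\cdot\frac{2(d\omega_d)^2\,\Gamma(p+1)}{d!}\sum_{n=0}^\infty\frac{(d+n-1)!}{n!\,(d+2n)^{p}} =: A^{p} A_p, \quad \epsilon\to 0.
\end{equation*}
Choosing $A^p=1/A_p$ normalises $[u_\epsilon]_{W^{s,p}}\to 1$; on the cap $|x-x_0|\leq\epsilon$ one has $u_\epsilon=A\log^{1/p'}(R_0/\epsilon)$, so the Trudinger integral restricted to the cap is at least $c\,\epsilon^{d-\alpha A^{p/(p-1)}}$, which diverges as $\epsilon\to 0$ precisely when $\alpha > d\,A_p^{1/(p-1)} = \alpha^*_d$, proving \eqref{main ineq on alpha star}.

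\textbf{Main obstacle.} The most delicate step is the weighted $L^q$ inequality for $d=1$: since \eqref{dyda} is unavailable, one must derive it from a logarithmic-weight boundary Hardy inequality combined with a Poincaré-type bound for $u-(u)_\Omega$, and the composite estimate must still degrade only polynomially in $q$; this interplay between the log weight and the power $q/p$ is the real technical work. A secondary difficulty is performing the Gegenbauer expansion in the divergence step cleanly enough to recover the exact constant appearing in \eqref{defn alpha star}, rather than merely a constant of the right order of magnitude.
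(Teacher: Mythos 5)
Your overall architecture (a weighted $L^q$ estimate with explicit $q$-growth, then Taylor expansion plus Stirling for the upper bound, then Moser concentration for the divergence) is exactly the paper's architecture, but the proposed proof of the backbone inequality has a genuine gap. You want
\[
\int_\Omega X_d(u)^{q}\,\frac{dx}{\delta_\Omega^{d}(x)} \;\leq\; C_1\,C_2^{q}\,q^{\,q(d-s)/d}\,[u]_{W^{s,p}(\Omega)}^{q},
\]
and you propose to get it by plugging $v=|u|^{q/p}$ into Dyda's inequality \eqref{dyda} and estimating $[v]_{W^{s,p}}$ through the fractional chain rule followed by H\"older with exponents $q/(q-p)$ and $q/p$ against the measure $|x-y|^{-2d}\,dx\,dy$. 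That H\"older step produces the factor $\left(\iint_{\Omega\times\Omega}(|u(x)|+|u(y)|)^{q}\,|x-y|^{-2d}\,dx\,dy\right)^{(q-p)/q}$, which is infinite: the kernel $|x-y|^{-2d}$ is not integrable near the diagonal and the numerator does not vanish there (this is precisely where the fractional case differs from the local identity $|\nabla|u|^{q/p}|=(q/p)|u|^{q/p-1}|\nabla u|$). Even setting that aside, a single substitution only yields a prefactor of order $(q/p)^{p}$, and the "iteration" that is supposed to upgrade this to the superexponential constant $q^{q(d-s)/d}$ is never specified; there is no obvious iteration that does so. The correct source of the $\tau^{(d-s)/d}$ growth is the sharp constant in the critical embedding $\|u\|_{L^\tau}\leq C\tau^{(d-s)/d}\|u\|_{W^{s,p}}$ for $sp=d$ (Peetre; \cite[Theorem 7.13]{leonibook}), which the paper combines with a dyadic decomposition of $\Omega$ near the boundary, scaled Sobolev--Poincar\'e estimates on each layer, and a chaining argument over layer averages (Lemmas \ref{sobolev}, \ref{flat case sp=1}, \ref{sp=d flat case}, Proposition \ref{prop}); for $d=1$ the same scheme also produces the logarithmic weight and the average subtraction, rather than assuming a log-weighted Hardy inequality as a black box. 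Without replacing your H\"older/iteration step by something of this kind, the finiteness assertion \eqref{main inequality} is not proved.

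The divergence half of your proposal is essentially the paper's: the paper does not recompute the seminorm asymptotics of the Moser functions but imports them from Parini--Ruf ($d\geq2$) and Iula ($d=1$), and then transfers divergence to the weighted integral using $\delta_\Omega^{-d}\geq C_1$ and controls the subtracted Taylor terms via Lemma \ref{lemma on tau < p} (a point you omit, though it is minor since the seminorm is normalized). Two details you should not gloss over in $d=1$: the quantity in the exponent is $X_1(u_\epsilon)$, so you must show that the average $(u_\epsilon)_\Omega\to0$ and that the factor $\ln\bigl(2R/\delta_\Omega(x)\bigr)$ on the concentration cap does not raise the threshold; the paper arranges this by concentrating where $\delta_\Omega$ is maximal (so the logarithm is $\approx\ln 2<1$, see \eqref{eqn 110}), whereas your construction at an arbitrary interior ball would in general yield divergence only above $\alpha^*_1$ multiplied by a power of $\ln(2R/\delta_\Omega(x_0))$, i.e.\ a weaker statement than \eqref{main ineq on alpha star}.
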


 The following corollary establishes that for any ~$u \in W^{s,p}_{0}(\Omega)$ and ~$\alpha > 0$, the function ~$ \Phi_{n_{0}} \left( \alpha X_{d}(u)^{\frac{d}{d-s}} \right) \frac{1}{\delta^{d}_{\Omega}(x)}$ is integrable using the estimate ~\eqref{main inequality} in Theorem ~\ref{main result}. A similar result can be found in ~\cite[Proposition 3.2]{ruf2019}, where E. Parini and B. Ruf considered the function without the weight term involving the distance function ~$\delta_{\Omega}$ and proved the integrability condition for the case ~$d \geq 2$ and ~$sp = d$. By applying the existence of ~$ \alpha_0$ in ~\eqref{main inequality} of Theorem ~\ref{main result}, we can derive the following corollary (see Subsection ~\ref{subsec1} for the proof):
\begin{corollary}\label{corollary 1}
     Let ~$\Omega$ be a bounded Lipschitz domain in ~$\mathbb{R}^{d}, ~ d \geq 1$ and $sp=d$. Let $n_{0} \in \mathbb{N}$ be such that ~$n_{0}  \geq p -1 > n_{0}-1 $ and ~$u \in W^{s,p}_{0}(\Omega)$. For any ~$\alpha>0$, we have
    \begin{equation}
     \Phi_{n_{0}} \left( \alpha X_{d}(u)^{\frac{d}{d-s}} \right) \frac{1}{\delta^{d}_{\Omega}(x)} \in L^{1}(\Omega). 
    \end{equation}
\end{corollary}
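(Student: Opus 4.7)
The plan is to use the density of $C^\infty_c(\Omega)$ in $W^{s,p}_0(\Omega)$ to bootstrap from the Trudinger-type estimate \eqref{main inequality} (which holds only for $\alpha \in [0,\alpha_0)$) to an arbitrary $\alpha>0$. Set $p' := p/(p-1)$; since $sp = d$, one checks that $d/(d-s)=p'$. Given $u \in W^{s,p}_0(\Omega)$ and $\alpha > 0$, fix $\eta>0$ with $2^{p'}\alpha\eta^{p'}<\alpha_0$, and pick $v \in C^\infty_c(\Omega)$ with $[u-v]_{W^{s,p}(\Omega)} < \eta$. The functional $X_d$ is sub-additive: for $d \geq 2$ this is the triangle inequality, and for $d=1$ it follows from $|u-(u)_\Omega|\leq |v-(v)_\Omega|+|(u-v)-(u-v)_\Omega|$. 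Combining sub-additivity with $(a+b)^{p'}\leq 2^{p'-1}(a^{p'}+b^{p'})$ and the convexity and monotonicity of $\Phi_{n_0}$ on $[0,\infty)$ yields the pointwise bound
\begin{equation*}
\Phi_{n_0}\bigl(\alpha X_d(u)^{p'}\bigr) \leq \tfrac{1}{2}\Phi_{n_0}\bigl(2^{p'}\alpha X_d(v)^{p'}\bigr) + \tfrac{1}{2}\Phi_{n_0}\bigl(2^{p'}\alpha X_d(u-v)^{p'}\bigr),
\end{equation*}
so it suffices to bound the integral of each of these two terms against $\delta_\Omega^{-d}\,dx$.

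For the $(u-v)$ term, I would set $w := (u-v)/[u-v]_{W^{s,p}(\Omega)}$ (the statement is trivial if $u=v$), so that $[w]_{W^{s,p}(\Omega)}=1$. The $1$-homogeneity of $X_d$ rewrites
\begin{equation*}
2^{p'}\alpha\, X_d(u-v)^{p'} = \bigl(2^{p'}\alpha [u-v]_{W^{s,p}(\Omega)}^{p'}\bigr)\, X_d(w)^{p'},
\end{equation*}
whose coefficient is strictly less than $\alpha_0$ by the choice of $\eta$. Theorem \ref{main result} applied to $w$ then yields finiteness of this contribution.

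For the $v$ term, when $d\geq 2$, since $v \in C^\infty_c(\Omega)$, $X_d(v)=|v|$ is supported in a compact subset of $\Omega$ on which $\delta_\Omega^{-d}$ is bounded, so the integral is trivially finite. The delicate case is $d=1$: here $X_1(v)$ does not vanish outside $\mathrm{supp}(v)$, and $\delta_\Omega^{-1}$ is not integrable near $\partial\Omega$. The key observation is that outside $\mathrm{supp}(v)$ one has $X_1(v)=|(v)_\Omega|/\ln(2R/\delta_\Omega(x))\to 0$ as $\delta_\Omega(x)\to 0$; using the Taylor expansion $\Phi_{n_0}(t)=\sum_{n\geq n_0} t^n/n!$ to get $\Phi_{n_0}(t)\leq Ct^{n_0}$ on bounded intervals, the integrand near the boundary is controlled by $C\,\delta_\Omega^{-1}\bigl(\ln(2R/\delta_\Omega)\bigr)^{-n_0 p'}$, which is integrable precisely because $n_0 p' \geq (p-1)\cdot p/(p-1)=p>1$. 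The main obstacle is therefore this boundary integrability in dimension one, which is where both the logarithmic correction built into the definition of $X_1$ and the condition $n_0\geq p-1$ in Theorem \ref{main result} become indispensable.
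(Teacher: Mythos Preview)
Your proposal is correct and follows essentially the same approach as the paper: split $u=v+w$ by density with $[w]_{W^{s,p}(\Omega)}$ small, use sub-additivity of $X_d$ together with convexity of $\Phi_{n_0}$ to separate the two contributions, handle the $w$ term via Theorem~\ref{main result}, and treat the $v$ term directly. The only notable difference is in the $d=1$ analysis of the $v$ term: the paper bounds $|v-(v)_\Omega|\leq C$ globally and reduces to the integrability of $\delta_\Omega^{-1}\ln^{-p'}(2R/\delta_\Omega)$, whereas you localise to the region outside $\mathrm{supp}(v)$ and obtain the sharper weight $\ln^{-n_0 p'}$; both exponents exceed $1$, so both arguments go through, and your remark that the condition $n_0\geq p-1$ is ``indispensable'' here is a slight overstatement---the paper's variant shows that $p'>1$ alone already suffices for this particular step.
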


 \smallskip

We remark that, Theorem ~\ref{main result} can be extended a  little further  by  adding an extra  

$$   \bigintssss_{\Omega} \sum_{n=0}^{n_{0}-1}  \frac{\alpha^{n} X_{d}(u)^{\frac{nd}{d-s}}}{n!} dx  $$ in \eqref{main inequality}.  More precisely, one can show that 

  \begin{equation}\label{extension main result}
  \begin{split}
       \sup \Bigg\{  \bigintsss_{\Omega} \Bigg( \sum_{n=0}^{n_{0}-1}  \frac{\alpha^{n} X_{d}(u)^{\frac{nd}{d-s}}}{n!} + \frac{\Phi_{n_{0}} \left( \alpha X_{d}(u)^{\frac{d}{d-s}} \right)}{\delta^{d}_{\Omega}(x)} \Bigg) dx  \ \bigg|  \  u \in  W^{s,p}_{0}(\Omega), &  \ [u]_{W^{s,p}(\Omega)} \leq 1  \Bigg\}  \\ &  <  \infty, \ \forall \ \alpha \in [0, \alpha_{0}),
  \end{split}
     \end{equation}
where ~$n_{0} \geq p-1> n_{0}-1$ and $\alpha_{0}$ is as defined in Theorem ~\ref{main result}. Also,  the above supremum is infinity for $\alpha > \alpha_d^*.$ Proof of this remark (Proof of remark) is done at the very end of this article. Since we are working on bounded domains, using the trivial estimate $\delta_\Omega(x) < R$, for all $x \in \Omega$ and for some $R>0$,  the main result \cite[Theorem 1.1]{ruf2019} by E. Parini and B. Ruf  follows directly from  \eqref{extension main result} and the line after that.

\smallskip

 The choice of $\alpha^{*}_{d}$ in the above theorem for $d \geq 2$ follows from the fractional version of the Moser-Trudinger inequality by E. Parini and B. Ruf ~\cite[Theorem $1.1$]{ruf2019} and the inequality
\begin{equation*}
  \int_{\Omega} \Phi_{n_{0}} \left( \alpha X_{d}(u)^{\frac{d}{d-s}} \right) dx \leq C \int_{\Omega} \Phi_{n_{0}} \left( \alpha X_{d}(u)^{\frac{d}{d-s}} \right)  \frac{dx}{\delta^{d}_{\Omega}(x)} , 
\end{equation*}
where $C>0$ is such that $\delta^{d}_{\Omega}(x) < C$ for all $x \in \Omega$. By applying \cite[Theorem $1.1$]{ruf2019}, we conclude that \eqref{main inequality} also fails when $d \geq 2$ for any ~$\alpha> \alpha^{*}_{d}$. For the case $d=1$, $\alpha^{*}_{1}$ is obtained from the work of ~\cite[Theorem $1.1$]{lula2017} where S. Iula considered the fractional version of Moser-Trudinger inequality in dimension one.

\smallskip

The above theorem can be interpreted as Trudinger-type theorem in the context of non local boundary Hardy Inequality. Notice that the form of $X_1$ is very different from $X_d$ for $d\geq 2$. To explain this, we first delve into the question why \eqref{dyda} fails for $d=1$.  Take $\Omega = (0,1)$. It is well known when ~$sp=1$, ~$W^{s,p}(\Omega) = W^{s,p}_{0}(\Omega)$ (see ~\cite[Theorem 6.78]{leonibook}), which implies that the constant functions are in $ W^{s,p}_{0}(\Omega)$. Clearly \eqref{dyda} fails for constant functions as the left hand side is singular and the right hand side is zero. To stay away from the space of constant functions the  average of the function is subtracted in $X_1$. In fact for any smooth non constant functions in $W^{s,p}(0,1)$ that takes constant values near $x=0$ or $x=1$, the left hand side of \eqref{dyda} is singular and this singularity is killed by multiplying the integrand on the left hand side by $\frac{1}{\ln^{p}}$ function. 

\smallskip 

The main idea of the proof of Theorem \ref{main result} is to show (see Proposition \ref{prop}) that there exists a positive constant $C$ that ``does not" depend on ~$\tau$  such that for all $\tau \geq p$, 
    \begin{equation*}
      \left(  \int_{\Omega} (X_{d}(u))^{\tau} \frac{dx}{\delta^{d}_{\Omega}(x)} \right)^{\frac{1}{\tau}} \leq C \tau^{\frac{d-s}{d}} [u]_{W^{s,p}(\Omega)}, \hspace{.3cm} \ \forall \ u \in W^{s,p}_{0}(\Omega).
    \end{equation*}
 Then the result follows by appropriately summing up these inequalities.

\smallskip

\noindent \textbf{Applications:} K. Perera and M. Squassina ~\cite{marco2018} studied the eigenvalue problem associated with fractional Moser-Trudinger nonlinearities, proving the existence of nontrivial solutions and exploring certain properties of distinct pairs of nontrivial solutions to the problem. In a related work, N. V. Thin ~\cite{thin2020} investigated the fractional singular Trudinger inequality and illustrate the existence of nontrivial solutions for the fractional $p$-Laplacian involving singular Moser-Trudinger inequality. Applying Theorem \ref{main result}, it would be interesting to investigate similar problems within the framework of Trudinger-type inequalities in the context of the fractional boundary Hardy inequality.

\smallskip

The available literature is huge on Hardy inequality and Trudinger-Moser type inequalities  and we refer to some of the related work in ~\cite{adimurthi2024submanifold, dyda2004, lula2017, LAM2020108673, ruf2019, marco2018,  sahu2024weighted, thin2020}.  Our work in this article can be considered as weighted version of fractional Trudinger-type inequality. For weighted Moser-Trudinger inequality in local version, we refer to ~\cite{adi2007, calanchi2015, gyula2021, gyula2015, gyula2016, roy2019}.

\smallskip

The article is organized in the following way: In Section ~\ref{preliminaries}, we present preliminary lemmas and notations that will be utilized to prove Proposition \ref{prop}. In Section \ref{fractional hardy}, we prove Proposition \ref{prop}  which follows as a consequence  of  Lemma \ref{flat case sp=1} and Lemma \ref{sp=d flat case} after the usual  patching up of the boundary technique. The proof of both the lemmas above can also be found in ~\cite{AdiPurbPro2023, adimurthi2023fractional}, we choose to present them here for the completeness of this article. Section ~\ref{proof of main result} contains the proof of the main theorem, Theorem ~\ref{main result}, which  just requires Proposition \ref{prop} as main ingredient. 


\section{Notations and Preliminaries}\label{preliminaries} 
In this section, we introduce the notations and preliminary lemmas that will be used in proving the Proposition ~\ref{prop} and Theorem ~\ref{main result}. Throughout this article, we shall use the following notations: 
\begin{itemize}
    \item $s$ will always be understood to be in ~$(0,1)$.
    \item we denote ~$|\Omega|$ the Lebesgue measure of ~$\Omega \subset \mathbb{R}^{d} $.
    \item $C>0$ will denote a generic constant that may change from line to line. 
\end{itemize}
\smallskip
 For any ~$p \in [1, \infty)$ and $s \in (0,1)$, define the fractional Sobolev space
\begin{equation*}
W^{s,p}(\Omega) := \left\{    u \in L^{p}(\Omega) : \int_{\Omega} \int_{\Omega}  \frac{|u(x)-u(y)|^{p}}{|x-y|^{d+sp}}  dxdy < \infty \right\},
\end{equation*}
endowed with the norm
\begin{equation}\label{norm defn}
    \|{u}\|_{W^{s,p}(\Omega)} := \left(  [u]^{p}_{W^{s,p}(\Omega)}  + \|{u}\|^{p}_{L^{p}(\Omega)}  \right)^{\frac{1}{p}}.
\end{equation}

\smallskip

{\bf Fractional Poincar\'e\ Inequality} ~\cite[Theorem 3.9]{edmunds2022}: Let ~$p \geq 1$ and ~$\Omega \subset \mathbb{R}^d$ be a bounded open set. Then there exists a constant ~$C = C(d,p,s, \Omega)>0$ such that
\begin{equation}\label{poincare}
    \int_{\Omega} |u(x)-(u)_{\Omega}|^{p}  dx \leq C [u]^{p}_{W^{s,p}(\Omega)} .
\end{equation} 

\smallskip

One has, for any ~$a_{1},  \dots , a_{m} \in \mathbb{R}$ and ~$\gamma \geq1$,
\begin{equation}\label{sumineq}
    \sum_{\ell=1}^{m} |a_{\ell}|^{\gamma} \leq  \left( \sum_{\ell=1}^{m} |a_{\ell}| \right)^{\gamma} .
\end{equation}

\smallskip

The next lemma establishes a fractional Sobolev inequality with parameter ~$\lambda>0$.  A similar type of lemma is available in ~\cite{AdiPurbPro2023, adimurthi2023fractional, squassina2018}.  This lemma is helpful in proving Lemma ~\ref{flat case sp=1} and Lemma ~\ref{sp=d flat case}.

\begin{lemma}\label{sobolev}
   Let ~$\Omega$ be a bounded Lipschitz domain in ~$\mathbb{R}^{d}, ~ d \geq 1$. Let ~$p > 1$ and ~$s \in (0,1)$ such that ~$sp = d$. Define ~$\Omega_{\lambda}: = \left\{ \lambda x : ~ x \in \Omega \right\}$ for ~$\lambda>0$, then there exists a positive constant ~$C=C(d,p,s,\Omega) $ such that, for any ~$\tau \geq p$, we have
    \begin{equation}
         \left( \fint_{\Omega_{\lambda}} |u(x)-(u)_{\Omega_{\lambda}}|^{\tau } dx \right)^{\frac{1}{\tau}}  \leq C  \tau^{\frac{d-s}{d}} [u]_{W^{s,p}(\Omega_{\lambda})}, \hspace{3mm} \forall \ u \in W^{s,p}(\Omega_{\lambda}).
    \end{equation}
    \end{lemma}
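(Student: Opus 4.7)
The plan is to exploit the critical scaling relation $sp=d$, which renders the Gagliardo seminorm scale-invariant, in order to reduce to a fixed domain, and then to extract the sharp $\tau$-dependence by using the standard fractional Moser--Trudinger inequality as a black box. Given $u \in W^{s,p}(\Omega_\lambda)$, I would introduce the rescaled function $v(x) := u(\lambda x)$ for $x \in \Omega$. A direct change of variables $y = \lambda x$ yields $(v)_\Omega = (u)_{\Omega_\lambda}$ and
$$\fint_\Omega |v(x)-(v)_\Omega|^{\tau}\, dx = \fint_{\Omega_\lambda}|u(y)-(u)_{\Omega_\lambda}|^{\tau}\, dy.$$
A double change of variables in the Gagliardo integral produces a factor $\lambda^{sp-d}$, which equals $1$ precisely because $sp=d$; consequently $[v]_{W^{s,p}(\Omega)} = [u]_{W^{s,p}(\Omega_\lambda)}$. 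Thus the inequality on $\Omega_\lambda$ is equivalent to the same inequality on the fixed domain $\Omega$, with the same constant, so it suffices to treat the case $\lambda = 1$.

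For the reduced problem on $\Omega$, I would invoke the fractional Moser--Trudinger inequality (Parini--Ruf \cite{ruf2019} for $d\geq 2$, Iula \cite{lula2017} for $d=1$), which, after subtracting the mean, furnishes a constant $\beta_0>0$ such that $\int_\Omega \exp\bigl(\beta_0\,|u - (u)_\Omega|^{d/(d-s)}\bigr)\, dx \leq C$ whenever $[u]_{W^{s,p}(\Omega)} \leq 1$. Expanding the exponential as a power series and comparing term-by-term with the finite upper bound yields, for every $n \in \mathbb{N}$,
$$\int_\Omega |u - (u)_\Omega|^{nd/(d-s)}\, dx \;\leq\; C\, \frac{n!}{\beta_0^n}.$$
Stirling's formula $n! \leq C\,(n/e)^{n}\sqrt{n}$ then converts this into $\|u - (u)_\Omega\|_{L^{\tau}(\Omega)} \leq C'\,\tau^{(d-s)/d}$ at the discrete exponents $\tau_n := nd/(d-s)$. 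For an arbitrary $\tau \geq p$, I would sandwich $\tau$ between two consecutive discrete exponents $\tau_n \leq \tau \leq \tau_{n+1}$ and apply H\"older's inequality; since the ratio $\tau_{n+1}/\tau_n$ is bounded, this interpolation costs only an absolute multiplicative constant and extends the estimate to every $\tau \geq p$. Homogeneity in $u$ removes the normalization $[u]_{W^{s,p}(\Omega)}\leq 1$, and the factor $|\Omega|^{-1/\tau}$ coming from the averaged integral is controlled uniformly in $\tau \geq p$ by $\max(1,|\Omega|^{-1/p})$.

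The main obstacle is pinning down the exact exponent $(d-s)/d = 1/p'$ in the constant: a cruder embedding argument producing, say, a linear or sub-polynomial growth with a larger exponent would be insufficient for the Moser--Trudinger-style summation $\sum \alpha^n/n!$ invoked later in Proposition~\ref{prop} and Theorem~\ref{main result}. Using the Moser--Trudinger inequality as input handles this cleanly by delivering the sharp rate directly. An elementary alternative, adopted in \cite{AdiPurbPro2023}, is an iterative fractional Sobolev embedding in which one gains a controlled amount of integrability at each step and tracks the constants inductively to recover the same sharp rate without invoking Moser--Trudinger.
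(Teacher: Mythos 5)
Your scaling reduction to $\lambda=1$ is the same as the paper's (the paper just performs it at the end rather than the beginning), and the mechanics by which you extract the rate $\tau^{(d-s)/d}$ from an exponential bound---positivity of the series and term-by-term comparison, Stirling, H\"older interpolation between the discrete exponents $\tau_n=nd/(d-s)$, and homogeneity to remove the normalization---are all sound. The genuine gap is the black box itself. The results you cite (\cite[Theorem 1.1]{ruf2019} and \cite[Theorem 1.1]{lula2017}) are uniform exponential bounds for $u\in\widetilde{W}^{s,p}_{0}(\Omega)$, i.e.\ for functions vanishing outside $\Omega$, normalized by $[u]_{W^{s,p}(\mathbb{R}^d)}\leq 1$; they do not contain the mean-oscillation version you invoke, namely $\int_{\Omega}\exp\bigl(\beta_{0}|u-(u)_{\Omega}|^{d/(d-s)}\bigr)dx\leq C$ for every $u\in W^{s,p}(\Omega)$ with only $[u]_{W^{s,p}(\Omega)}\leq 1$. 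Subtracting the mean does not place $u-(u)_{\Omega}$ in $\widetilde{W}^{s,p}_{0}(\Omega)$, and the $\Omega$-seminorm is weaker than the $\mathbb{R}^d$-seminorm, so your statement does not follow formally from the cited theorems. Worse, that mean-oscillation exponential inequality is essentially equivalent to the lemma being proved (one passes between the two by summing the series or, as you do, by extracting its terms), so using it as a black box skirts circularity unless you can point to a reference that actually proves it in this form; the standard proofs of such statements go through precisely the $L^{\tau}$ estimate of this lemma.

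For comparison, the paper works at the level of $L^{\tau}$ norms directly and never needs a Moser--Trudinger input: extend $u$ to $\mathbb{R}^{d}$ by the extension theorem for Lipschitz domains \cite[Theorem 5.4]{di2012hitchhikers}, apply the critical-case embedding with explicit constant $\|\tilde{u}\|_{L^{\tau}(\mathbb{R}^{d})}\leq C\,\tau^{\frac{d-s}{d}+\frac{1}{\tau}}\|\tilde{u}\|_{W^{s,p}(\mathbb{R}^{d})}$ for $\tau\geq p$ (\cite[Theorem 7.13]{leonibook}, going back to Peetre), observe that $\tau^{1/\tau}$ is bounded, then apply the resulting inequality to $u-(u)_{\Omega}$ and absorb the $L^{p}$ part of the norm via the fractional Poincar\'e inequality \eqref{poincare}, and finally scale using $sp=d$. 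If you wish to keep your structure, the repair is to replace the Moser--Trudinger black box by this extension-plus-sharp-embedding step (or by the elementary iteration of \cite{AdiPurbPro2023} you mention at the end), which yields the exponent $(d-s)/d$ directly and without any circularity concern.
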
 
\begin{proof}
 Let ~$\Omega \subset \mathbb{R}^d$ be a bounded Lipschitz domain and $u \in W^{s,p}(\Omega)$. Then from the extension theorem in ~\cite[Theorem $5.4$]{di2012hitchhikers} there exists $\tilde{u} \in W^{s,p}(\mathbb{R}^{d})$ such that $\tilde{u}|_{\Omega} =  u$ and 
 \begin{equation*}
     \| \tilde{u} \|_{W^{s,p}(\mathbb{R}^{d})} \leq C \|u\|_{W^{s,p}(\Omega)},
 \end{equation*} 
where $C=C(d,p,s, \Omega)>0$. By \cite[Theorem $7.13$]{leonibook} (see also \cite[Theorem 9.1]{Peetr1966}), for any ~$\tau \geq p$ if ~$sp=d$, we have
\begin{equation*}
      \| \tilde{u}\|_{L^{\tau}(\mathbb{R}^{d})} \leq C \tau^{\frac{d-s}{d}+ \frac{1}{\tau}} \|\tilde{u}\|_{W^{s,p}(\mathbb{R}^{d})},
  \end{equation*}
   where ~$C=C(d,p,s, \Omega)$ is a positive constant. Combining the above two inequalities with $\| u \|_{L^{\tau}(\Omega)} \leq \| \tilde{u}\|_{L^{\tau}(\mathbb{R}^{d})}$, we obtain
   \begin{equation*}
        \| u\|_{L^{\tau}(\Omega)} \leq C \tau^{\frac{d-s}{d}+ \frac{1}{\tau}} \|u\|_{W^{s,p}(\Omega)}.
   \end{equation*}
    Since, $\tau^{\frac{1}{\tau}}$ is bounded for any $\tau \geq p$. Therefore, we have
    \begin{equation}\label{Sobineqq}
        \|u\|_{L^{\tau}(\Omega)} \leq C \tau^{\frac{d-s}{d}} \|u\|_{W^{s,p}(\Omega)}.
   \end{equation}
   Applying ~\eqref{Sobineqq} with ~$u-(u)_{\Omega}$ and using ~\eqref{poincare}, we have
    \begin{equation*}
        \|u-(u)_{\Omega}\|_{L^{\tau}(\Omega)} \leq C \tau^{\frac{d-s}{d}} [u]_{W^{s,p}(\Omega)} .
    \end{equation*}
    Let us apply the above inequality to ~$u(\lambda x)$ instead of ~$u(x)$. This gives
    \begin{equation*}
        \left(  \fint_{\Omega}  \Big|u(\lambda x)-\fint_{\Omega} u(\lambda x) dx \Big|^{\tau } \ dx \right)^{\frac{1}{\tau}}  \leq C  \tau^{\frac{d-s}{d}} \left( \int_{\Omega} \int_{\Omega} \frac{|u(\lambda x) - u(\lambda y)|^{p}}{|x-y|^{2d}} dxdy  \right)^{\frac{1}{p}}  . 
    \end{equation*}
    Using the fact
    \begin{equation*}
        \fint_{\Omega} u(\lambda x) dx = \fint_{\Omega_{\lambda}} u(x) dx,
    \end{equation*}
    we have
    \begin{equation*}
       \left(  \fint_{\Omega} |u(\lambda x)-(u)_{\Omega_{\lambda}}|^{\tau } \ dx \right)^{\frac{1}{\tau}}  \leq C  \tau^{\frac{d-s}{d}} \left( \int_{\Omega} \int_{\Omega} \frac{|u(\lambda x) - u(\lambda y)|^{p}}{|x-y|^{2d}} dxdy  \right)^{\frac{1}{p}}  . 
    \end{equation*}
    By changing the variable ~$X=\lambda x$ and ~$Y= \lambda y$, we obtain 
    \begin{equation*}
         \left( \fint_{\Omega_{\lambda}} |u(x)-(u)_{\Omega_{\lambda}}|^{\tau } dx \right)^{\frac{1}{\tau}}  \leq C  \tau^{\frac{d-s}{d}}   [u]_{W^{s,p}(\Omega_{\lambda})}   . 
    \end{equation*}
    This finishes the proof of the lemma.
\end{proof}

\smallskip

The next lemma establishes fractional Sobolev inequalities for the case $sp=d$ and $\tau \geq 1$. This inequalities plays a crucial role in establishing our main result.

\begin{lemma}\label{lemma on tau < p}
  Let ~$\Omega$ be a bounded Lipschitz domain in $\mathbb{R}^{d}$. Let $p>1$ and $s \in (0,1)$ such that $sp=d$. Then for any $\tau \geq 1$, there exists a constant $C=C(d,p,s, \Omega)>0$ such that for $d \geq 2$,
   \begin{equation}\label{sob ineq d geq 2}
        \|u\|_{L^{\tau}(\Omega)} \leq C \tau^{\frac{d-s}{d}} [u]_{W^{s,p}(\Omega)}, \hspace{3mm} \forall \ u \in W^{s,p}_{0}(\Omega).
    \end{equation}
  In the case $d=1$, 
     \begin{equation}\label{sob ineq d=1}
       \| u-(u)_{\Omega}\|_{L^{\tau}(\Omega)} \leq C \tau^{1-s} [u]_{W^{s,p}(\Omega)},  \hspace{3mm} \forall \ u \in W^{s,p}_{0}(\Omega).
    \end{equation}
\end{lemma}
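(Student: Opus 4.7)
The plan is to reduce both statements to the previous Lemma \ref{sobolev} applied with $\lambda = 1$, then handle the regime $1 \leq \tau < p$ by a straightforward Hölder argument, and in the case $d \geq 2$ pass from the averaged estimate to an estimate on $\|u\|_{L^\tau}$ using the boundary Hardy inequality \eqref{dyda}.

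First I would split into two regimes, $\tau \geq p$ and $1 \leq \tau \leq p$. For $\tau \geq p$, Lemma \ref{sobolev} with $\lambda = 1$ gives
\begin{equation*}
 \left( \fint_\Omega |u - (u)_\Omega|^\tau \, dx \right)^{1/\tau} \leq C \tau^{\frac{d-s}{d}} [u]_{W^{s,p}(\Omega)},
\end{equation*}
and multiplying by $|\Omega|^{1/\tau}$ one obtains $\|u - (u)_\Omega\|_{L^\tau(\Omega)} \leq C |\Omega|^{1/\tau} \tau^{(d-s)/d} [u]_{W^{s,p}(\Omega)}$. Since $|\Omega|^{1/\tau}$ is uniformly bounded by $\max(1, |\Omega|^{1/p})$ for all $\tau \geq p$, this yields the desired estimate with $(u)_\Omega$ subtracted. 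For $d = 1$ this is already \eqref{sob ineq d=1} in the range $\tau \geq p$.

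For $d \geq 2$ I still need to remove $(u)_\Omega$. By the triangle inequality,
\begin{equation*}
 \|u\|_{L^\tau(\Omega)} \leq \|u - (u)_\Omega\|_{L^\tau(\Omega)} + |(u)_\Omega| \, |\Omega|^{1/\tau}.
\end{equation*}
By Hölder's inequality, $|(u)_\Omega| \leq |\Omega|^{-1/p} \|u\|_{L^p(\Omega)}$; and since $u \in W^{s,p}_0(\Omega)$ and $\delta_\Omega \leq R$, Dyda's inequality \eqref{dyda} gives $\|u\|_{L^p(\Omega)} \leq R^{d/p} ( \int_\Omega |u|^p/\delta_\Omega^d \, dx)^{1/p} \leq C [u]_{W^{s,p}(\Omega)}$. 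Combining, one obtains \eqref{sob ineq d geq 2} for $\tau \geq p$.

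The range $1 \leq \tau < p$ is then handled by Hölder's inequality: one estimates $\|v\|_{L^\tau(\Omega)} \leq |\Omega|^{1/\tau - 1/p} \|v\|_{L^p(\Omega)}$ with $v = u$ (for $d \geq 2$) or $v = u - (u)_\Omega$ (for $d = 1$), applies the $\tau = p$ case already proved, and uses that $\tau^{(d-s)/d} \geq 1$ for $\tau \geq 1$ to absorb the universal prefactor. No serious obstacle arises; the only real care is the uniform control of $|\Omega|^{1/\tau}$ and $|\Omega|^{1/\tau - 1/p}$, which are bounded by a constant depending only on $|\Omega|$ and $p$ in their respective ranges.
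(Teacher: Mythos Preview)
Your proposal is correct and follows essentially the same approach as the paper: Lemma \ref{sobolev} for $\tau\ge p$, Dyda's inequality \eqref{dyda} to control $\|u\|_{L^p}$ by the seminorm when $d\ge2$, and H\"older for $1\le\tau<p$. The only cosmetic difference is that the paper invokes the intermediate full-norm inequality \eqref{Sobineqq} from the proof of Lemma \ref{sobolev} and then applies Dyda, whereas you use the averaged statement of Lemma \ref{sobolev} and remove $(u)_\Omega$ via the triangle inequality plus Dyda; both routes are equivalent.
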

\begin{proof}
Let ~$\Omega \subset \mathbb{R}^d$ be a bounded Lipschitz domain and $u \in W^{s,p}_{0}(\Omega)$.  From fractional boundary Hardy inequality for the case ~$sp = d$ and ~$d \geq 2$ (refer to ~\eqref{dyda}), we have the inequality
\begin{equation*}
  \int_{\Omega} |u(x)|^{p} dx \leq C \int_{\Omega} \frac{|u(x)|^{p}}{\delta^{d}_{\Omega}(x)} dx \leq C [u]^{p}_{W^{s,p}(\Omega)},
\end{equation*}
 where $C=C(d,p,s, \Omega)>0$. Applying this inequality to ~\eqref{Sobineqq} for $\tau \geq p$ and $d \geq 2$, we conclude that there exists a constant $C=C(d,p,s, \Omega)>0$ such that
   \begin{equation}\label{eqn11111}
        \|u\|_{L^{\tau}(\Omega)} \leq C \tau^{\frac{d-s}{d}} [u]_{W^{s,p}(\Omega)}.
    \end{equation}
   This establishes \eqref{sob ineq d geq 2} for $\tau \geq p$.  Now assume $\tau <p$. By applying H$\ddot{\text{o}}$lder's inequality with exponent $\frac{p}{\tau}>1 $  and $\frac{p}{p- \tau}>1$, we get
    \begin{equation*}
     \left(   \int_{\Omega} |u(x)|^{\tau} dx \right)^{\frac{1}{\tau}} \leq \left( \int_{\Omega} |u(x)|^{p} dx  \right)^{\frac{1}{p}} |\Omega|^{\frac{1}{\tau}- \frac{1}{p}}.
    \end{equation*}
    By applying the inequality \eqref{eqn11111}  for $\tau=p$ and using $|\Omega|^{\frac{1}{\tau} - \frac{1}{p}} \leq \max \{ 1, |\Omega|^{1-\frac{1}{p}} \}$, we obtain
    \begin{equation*}
        \|u\|_{L^{\tau}(\Omega)} \leq C [u]_{W^{s,p}(\Omega)},
    \end{equation*}
   where $C= C(d,p,s, \Omega)>0$. Moreover, $\tau^{\frac{d-s}{d}} \geq 1 $,  we obtain
   \begin{equation*}
        \| u \|_{L^{\tau}(\Omega)} \leq C \tau^{\frac{d-s}{d}} [u]_{W^{s,p}(\Omega)}.
 \end{equation*}
Furthermore, when $d=1$. From Lemma ~\ref{sobolev} with $\lambda=1$ and $u \in W^{s,p}_{0}(\Omega)$, we have for any $\tau \geq p$,
\begin{equation*}
    \|u-(u)_{\Omega}\|_{L^{\tau}(\Omega)} \leq C \tau^{1-s} [u]_{W^{s,p}(\Omega)}.
\end{equation*}
Similarly using H$\ddot{\text{o}}$lder's inequality for the case $\tau <p$ with exponent  $\frac{p}{\tau}>1 $  and $\frac{p}{p- \tau}>1$ and then applying fractional Poincar\'e inequality (see ~\eqref{poincare}) for the case $d=1$, we arrive at
\begin{equation*}
      \| u-(u)_{\Omega}\|_{L^{\tau}(\Omega)} \leq C [u]_{W^{s,p}(\Omega)},
\end{equation*}
 where $C= C(d,p,s, \Omega)>0$. Moreover, $\tau^{1-s} \geq 1 $. Therefore, for $\tau<p$, we obtain
    \begin{equation*}
       \|u-(u)_{\Omega}\|_{L^{\tau}(\Omega)} \leq C \tau^{1-s} [u]_{W^{s,p}(\Omega)}.
    \end{equation*}
    This proves the lemma.
\end{proof}

\smallskip

 The following lemma establishing a connection to the average of ~$u$ over two disjoint sets. A similar type of lemma is also available in ~\cite{adimurthi2023fractional}.   This lemma is helpful in proving Lemma ~\ref{flat case sp=1} and Lemma ~\ref{sp=d flat case}.
\begin{lemma}\label{avg}
    Let ~$E$ and ~$F$ be disjoint set in ~$\mathbb{R}^d$. Then for any ~$\tau \geq 1$, we have
    \begin{equation}
        |(u)_{E} - (u)_{F}|^{\tau} \leq 2^{\tau} \frac{|E \cup F|}{\min \{ |E|, |F| \} }  \fint_{E \cup F} |u(x)-(u)_{E \cup F}|^{\tau}dx  . 
    \end{equation}
\end{lemma}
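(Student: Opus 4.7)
The plan is to decompose the difference $(u)_E - (u)_F$ by inserting the common average $(u)_{E\cup F}$, and then estimate each piece by Jensen's inequality.

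First I would write
\begin{equation*}
(u)_E - (u)_F = \bigl((u)_E - (u)_{E\cup F}\bigr) - \bigl((u)_F - (u)_{E\cup F}\bigr),
\end{equation*}
and apply the elementary inequality $|a+b|^{\tau} \leq 2^{\tau-1}(|a|^{\tau} + |b|^{\tau})$, valid for $\tau \geq 1$, to obtain
\begin{equation*}
|(u)_E - (u)_F|^{\tau} \leq 2^{\tau-1}\bigl(|(u)_E - (u)_{E\cup F}|^{\tau} + |(u)_F - (u)_{E\cup F}|^{\tau}\bigr).
\end{equation*}

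Next I would handle each term by rewriting $(u)_E - (u)_{E\cup F} = \fint_E (u(x) - (u)_{E\cup F})\, dx$ and applying Jensen's inequality to the convex function $t \mapsto |t|^{\tau}$, which gives
\begin{equation*}
|(u)_E - (u)_{E\cup F}|^{\tau} \leq \fint_E |u(x) - (u)_{E\cup F}|^{\tau}\, dx \leq \frac{1}{|E|} \int_{E\cup F} |u(x) - (u)_{E\cup F}|^{\tau}\, dx,
\end{equation*}
where the last step uses $E \subset E \cup F$ and non-negativity of the integrand. An identical estimate holds with $F$ in place of $E$.

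Finally I would combine the two bounds using $\frac{1}{|E|} + \frac{1}{|F|} \leq \frac{2}{\min\{|E|,|F|\}}$, to conclude
\begin{equation*}
|(u)_E - (u)_F|^{\tau} \leq 2^{\tau-1}\cdot \frac{2}{\min\{|E|,|F|\}}\int_{E\cup F} |u(x) - (u)_{E\cup F}|^{\tau}\, dx = 2^{\tau} \frac{|E\cup F|}{\min\{|E|,|F|\}} \fint_{E\cup F} |u(x) - (u)_{E\cup F}|^{\tau}\, dx,
\end{equation*}
which is the desired inequality. There is no serious obstacle here; the only nontrivial ingredient is the splitting through the joint average $(u)_{E\cup F}$, which is needed because directly comparing $(u)_E$ and $(u)_F$ has no natural integral representation, whereas each deviates from the common average by a quantity that Jensen controls.
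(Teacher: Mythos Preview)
Your proof is correct and follows essentially the same approach as the paper: insert the joint average $(u)_{E\cup F}$, split via convexity, and bound each resulting average using Jensen/H\"older. The only cosmetic difference is that the paper uses the cruder splitting $|a+b|^{\tau}\le 2^{\tau}(|a|^{\tau}+|b|^{\tau})$ and then combines $\frac{1}{|E|}\int_E+\frac{1}{|F|}\int_F\le \frac{1}{\min\{|E|,|F|\}}\int_{E\cup F}$ directly, whereas you use the sharper $2^{\tau-1}$ split followed by $\frac{1}{|E|}+\frac{1}{|F|}\le \frac{2}{\min\{|E|,|F|\}}$; both routes land on the same constant.
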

\begin{proof} 
Let us consider ~$|(u)_{E}-(u)_{F}|^{\tau}$, we have
\begin{equation*}
\begin{split}
    |(u)_{E}-(u)_{F}|^{\tau}  & \leq 2^{\tau} |(u)_{E} - (u)_{E \cup F}|^{\tau} +  2^{\tau} | (u)_{F} + (u)_{E \cup F}|^{\tau} \\
       & = 2^{\tau} \Big| \fint_{E} \left\{ u(x) - (u)_{E \cup F} \right\}   dx  \Big|^{\tau} + 2^{\tau} \Big| \fint_{F} \left\{ u(x) - (u)_{E \cup F} \right\}  dx \Big|^{\tau}. 
\end{split}
\end{equation*}
    By using H$\ddot{\text{o}}$lder's inequality with ~$ \frac{1}{\tau} +  \frac{1}{\tau'} = 1$, we have
    \begin{equation*}
    \begin{split}
     |(u)_{E}-(u)_{F}|^{\tau}  &\leq  2^{\tau} \fint_{E} |u(x) - (u)_{E \cup F} |^{\tau}   dx + 2^{\tau} \fint_{F} | u(x) - (u)_{E \cup F} |^{\tau}  dx \\
       &\leq  \frac{2^{\tau}}{\text{min} \{ |E|, |F| \} } \int_{E \cup F} |u(x) - (u)_{E \cup F} |^{\tau}   dx \\&
       = 2^{\tau} \frac{|E \cup  F|}{\text{min} \{ |E|, |F| \}} \fint_{E \cup F} |u(x) - (u)_{E \cup F} |^{\tau}   dx .
    \end{split}
    \end{equation*}
    This finishes the proof of the lemma.
\end{proof}
\smallskip

The next lemma establishes a basic inequality for large ~$n \in \mathbb{N}$. This lemma will be helpful in establishing that the positive constant ~$C$ in Proposition ~\ref{prop} does not depend on ~$\tau$.

\begin{lemma}\label{large n ineq}
Let ~$p>1, ~ \tau \geq p$. Then for large ~$n \in \mathbb{N}$, we have
     \begin{equation}
         \frac{1}{(n)^{\tau-1}} - \frac{1}{ \left(n + \frac{1}{2} \right)^{\tau-1}} \geq \left( \frac{p-1}{4} \right) \frac{1}{n^{\tau}}.
    \end{equation}
\end{lemma}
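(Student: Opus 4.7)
My plan is to reduce the inequality to a form where Bernoulli's inequality applies cleanly, and then carefully track how the resulting bound depends on the parameter $\tau$. The key observation is that the left-hand side factors as
\begin{equation*}
\frac{1}{n^{\tau-1}} - \frac{1}{\left(n+\frac{1}{2}\right)^{\tau-1}} = \frac{1}{n^{\tau-1}} \left( 1 - \left(1 + \frac{1}{2n}\right)^{-(\tau-1)} \right),
\end{equation*}
so the problem reduces to showing that $1 - (1+\epsilon)^{-(\tau-1)} \geq \frac{p-1}{2}\epsilon$ with $\epsilon := \frac{1}{2n}$, for $n$ large (independent of $\tau$).

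The first step is to apply Bernoulli's inequality, $(1+\epsilon)^{\tau-1} \geq 1 + (\tau-1)\epsilon$, valid since $\tau - 1 \geq p - 1 > 0$. Inverting yields
\begin{equation*}
1 - \left(1+\frac{1}{2n}\right)^{-(\tau-1)} \geq 1 - \frac{1}{1 + \frac{\tau-1}{2n}} = \frac{\tau-1}{2n + \tau - 1}.
\end{equation*}
The second step is to exploit the monotonicity of $t \mapsto \frac{t}{2n+t}$ in $t$: since $\tau \geq p$, we obtain the lower bound $\frac{p-1}{2n+p-1}$. Finally, for $n \geq \frac{p-1}{2}$ (this is where ``large $n$'' is used, and crucially this threshold depends only on $p$, not on $\tau$) we have $2n + p - 1 \leq 4n$, so $\frac{p-1}{2n+p-1} \geq \frac{p-1}{4n}$. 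Multiplying by $\frac{1}{n^{\tau-1}}$ produces the claimed estimate.

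The main point to be careful about is the uniformity in $\tau$: a naive mean value theorem argument would introduce a factor like $(n+1/2)^{-\tau}$, and bounding $(n+1/2)^\tau$ above by a multiple of $n^\tau$ costs a factor $(1+1/(2n))^\tau$ that blows up when $\tau$ is allowed to grow with $n$. Passing through Bernoulli in the precise form above avoids this issue entirely, because the resulting expression $\frac{\tau-1}{2n+\tau-1}$ is already monotone in $\tau$ and bounded below by its value at $\tau = p$. This is the only delicate point; the rest is arithmetic.
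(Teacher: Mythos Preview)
Your proof is correct. The factorisation followed by Bernoulli's inequality gives exactly the lower bound $\frac{\tau-1}{2n+\tau-1}$, and the monotonicity step $\frac{\tau-1}{2n+\tau-1}\geq\frac{p-1}{2n+p-1}$ is the right move to extract a $\tau$-independent constant; the final arithmetic $2n+p-1\leq 4n$ for $n\geq(p-1)/2$ then yields the claim with a threshold depending only on $p$.

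The paper itself does not supply a proof here but simply cites \cite[Lemma~2.6]{adimurthi2023fractional}, so a line-by-line comparison is not possible from the present source. Your emphasis on uniformity in $\tau$ is exactly the point that matters for the downstream application: in Lemma~\ref{flat case sp=1} the asymptotic is invoked for ``large $-k$'' while the overall constant $C$ must remain independent of $\tau$, so the threshold for ``large $n$'' cannot be allowed to depend on $\tau$. Your argument delivers this cleanly (threshold $n\geq(p-1)/2$), whereas --- as you note --- a mean-value-theorem approach would produce a factor $(1+\tfrac{1}{2n})^{-\tau}$ that cannot be controlled uniformly in $\tau$. Nothing is missing.
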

\begin{proof}
 See ~\cite[Lemma 2.6]{adimurthi2023fractional} for the proof.
\end{proof}

\section{Fractional boundary Hardy inequality}\label{fractional hardy}

The next lemma proves the Proposition ~\ref{prop} when ~$\Omega= (0,1)$ for the case $d =1$ and the test functions are supported on ~$\left(0, \frac{1}{2} \right)$. The following lemma is available in ~\cite{AdiPurbPro2023}. For the sake of completeness of this article, we provide a proof here, explicitly highlighting the dependence on the parameter $\tau$ in the resulting constant.

\begin{lemma}\label{flat case sp=1}
     Let   ~$sp=d=1$ and ~$\tau \geq p$, then there exists a constant ~$C= C(d,p) > 0$  such that
    \begin{equation}
     \left(  \int_{0}^{\frac{1}{2}} \frac{|u(x)|^{\tau}}{x \ln^{\tau} \left(\frac{1}{x} \right)}  dx \right)^{\frac{1}{\tau}} \leq  C  \tau^{1-s} 
 \|u\|_{W^{s,p}((0,1))}, \hspace{3mm} \forall \ u \in C^{1}_{c}((0,1)).
    \end{equation}
\end{lemma}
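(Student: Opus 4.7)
My plan is a dyadic ring decomposition of $(0, 1/2)$, combined with the localized Poincaré--Sobolev estimate of Lemma~\ref{sobolev} and a classical discrete Hardy inequality to absorb the averages. Set $I_n := (2^{-(n+1)}, 2^{-n})$ and $u_n := (u)_{I_n}$ for $n \geq 1$. On $I_n$ we have $\frac{1}{x\, \ln^\tau(1/x)} \leq \frac{C\,2^n}{n^\tau}$, and combining this with $|u|^\tau \leq 2^{\tau-1}(|u-u_n|^\tau + |u_n|^\tau)$ and $|I_n| \leq 2^{-n}$,
\begin{equation*}
\int_{I_n} \frac{|u(x)|^\tau}{x\, \ln^\tau(1/x)}\,dx \;\leq\; \frac{C\,2^\tau}{n^\tau} \Big( \fint_{I_n} |u - u_n|^\tau\,dx + |u_n|^\tau \Big).
\end{equation*}
By Lemma~\ref{sobolev}, taken with $\Omega = (1/2, 1)$ and $\lambda = 2^{-n}$ so that $\Omega_\lambda = I_n$, the first average is bounded by $(C\tau^{1-s})^\tau [u]^\tau_{W^{s,p}(I_n)}$.

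After summing in $n$, the problem reduces to estimating (a) $\sum_n n^{-\tau}[u]^\tau_{W^{s,p}(I_n)}$ and (b) $\sum_n n^{-\tau}|u_n|^\tau$ by $C^\tau \tau^{\tau(1-s)}\|u\|_{W^{s,p}}^\tau$. For (a), sub-additivity $\sum_n [u]^p_{W^{s,p}(I_n)} \leq [u]^p_{W^{s,p}((0,1))}$ (since the $I_n$ are pairwise disjoint) combined with the elementary inequality $y^{\tau/p} \leq y \cdot M^{\tau/p - 1}$ for $0 \leq y \leq M := [u]^p_{W^{s,p}((0,1))}$ collapses (a) to $\leq [u]^\tau_{W^{s,p}((0,1))}$.

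For (b) I telescope back to the outermost ring. Write $|u_n|^\tau \leq 2^{\tau-1}(|u_n - u_1|^\tau + |u_1|^\tau)$ and $|u_n - u_1| \leq \sum_{k=1}^{n-1}|u_k - u_{k+1}|$. Lemma~\ref{avg} with $E = I_k$, $F = I_{k+1}$ (for which $|E \cup F|/\min\{|E|,|F|\} = 3$) combined with Lemma~\ref{sobolev} on $I_k \cup I_{k+1}$ gives $|u_k - u_{k+1}| \leq C\tau^{1-s}[u]_{W^{s,p}(I_k \cup I_{k+1})}$. The classical discrete Hardy inequality
\begin{equation*}
\sum_{n \geq 2} \frac{1}{n^\tau}\Big(\sum_{k=1}^{n-1} a_k\Big)^\tau \;\leq\; \Big(\frac{\tau}{\tau-1}\Big)^\tau \sum_k a_k^\tau,
\end{equation*}
together with the same sub-additivity trick (now using $\sum_k [u]^p_{W^{s,p}(I_k \cup I_{k+1})} \leq 2[u]^p_{W^{s,p}((0,1))}$, by the bounded overlap of consecutive pairs), controls $\sum_n n^{-\tau}|u_n - u_1|^\tau$ by $C^\tau \tau^{\tau(1-s)}[u]^\tau_{W^{s,p}}$. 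The anchor $u_1$ is absorbed via $|u_1|^\tau \leq 4\fint_{I_1}|u|^\tau \leq C\|u\|^\tau_{L^\tau((0,1))}$ and the scalar Sobolev embedding $\|u\|_{L^\tau} \leq C\tau^{1-s}\|u\|_{W^{s,p}}$, which is exactly \eqref{Sobineqq} inside the proof of Lemma~\ref{sobolev}.

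The main obstacle is the $\tau$-bookkeeping: every constant arising at the $\tau$-th power level---the $2^\tau$'s from the convexity splitting, the Hardy constant $(\tau/(\tau-1))^\tau$ (uniformly bounded for $\tau \geq p > 1$ since it decreases to $e$), and the ratio $3$ from Lemma~\ref{avg}---must collapse to an absolute $C$ after the $\tau$-th root, while the genuine $\tau^{1-s}$ growth from Lemma~\ref{sobolev} and the Sobolev embedding is preserved. Combining (a), (b) and the Sobolev estimate on $u_1$ then yields $\int_0^{1/2} \frac{|u|^\tau}{x \ln^\tau(1/x)}\,dx \leq C^\tau \tau^{\tau(1-s)}\|u\|^\tau_{W^{s,p}((0,1))}$, and taking the $\tau$-th root finishes the proof.
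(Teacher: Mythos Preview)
Your proof is correct and shares the paper's dyadic framework (rings $I_n$, Lemma~\ref{sobolev} on each ring, split into oscillation plus average), but handles the average sum $\sum_n n^{-\tau}|u_n|^\tau$ by a genuinely different mechanism. The paper controls this sum via a hands-on weighted telescoping: it applies the refined convexity inequality~\eqref{estimate} with the $k$-dependent constant $c = \big(\tfrac{-k-1}{-k-3/2}\big)^{\tau-1}$, sums, and then invokes the asymptotic Lemma~\ref{large n ineq} to show that the leftover weights $\tfrac{1}{(-k-1)^{\tau-1}} - \tfrac{1}{(-k-1/2)^{\tau-1}}$ are comparable to $\tfrac{1}{(-k-1)^\tau}$; it also uses the compact support of $u$ to kill $(u)_{A_m}$ at the innermost ring. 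You instead write $u_n - u_1$ as a telescoping sum of neighbouring differences and absorb everything into the classical discrete Hardy inequality with constant $(\tau/(\tau-1))^\tau$, which is uniformly bounded for $\tau \geq p$; this sidesteps both~\eqref{estimate} and Lemma~\ref{large n ineq} entirely. The paper's route is fully self-contained, while yours imports a standard tool but keeps the $\tau$-bookkeeping cleaner. Both approaches anchor at the outermost ring and close via the embedding~\eqref{Sobineqq}.
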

\begin{proof}
For each ~$k \leq -1$, set $A_{k} := \{ x :  \ 2^{k} \leq x < 2^{k+1} \}$. Applying Lemma ~\ref{sobolev} with ~$\Omega = (1,2)$, ~$ \lambda = 2^k$ and $sp=d=1$, we have
\begin{equation*}
    \fint_{A_{k}} |u(x)-(u)_{A_{k}}|^{\tau}  dx \leq C^{\tau}\tau^{(1-s) \tau} [u]^{\tau}_{W^{s,p}(A_{k})}  ,
\end{equation*}
where ~$C= C(d,p)$ is a positive constant. Since, ~$x \geq 2^k$ which implies ~$ \frac{1}{x} \leq \frac{1}{2^{k}}$. Therefore, using this and previous inequality, we have
\begin{equation*}
\begin{split}
    \int_{A_{k}} \frac{|u(x)|^{\tau}}{x}  dx &  \leq \frac{2^{\tau}}{2^{k }} \int_{A_{k}} |u(x)-(u)_{A_{k}}|^{\tau}   dx + \frac{2^{\tau}}{2^{k }} \int_{A_{k}} |(u)_{A_{k}}|^{\tau}dx \\ & \leq C^{\tau} \tau^{(1-s) \tau} [u]^{\tau}_{W^{s,p}(A_{k})} + 2^{\tau} |(u)_{A_{k}}|^{\tau}, 
\end{split}
    \end{equation*}
where ~$C$ is a positive constant depends on ~$d$ and ~$p$. For each ~$x \in A_{k}$, we have ~$\frac{1}{x} > \frac{1}{2^{k+1}}$ which  implies ~$ \ln \left(\frac{1}{x} \right) > (-k-1) \ln 2$. Using this and ~$\frac{1}{(-k-1)^{\tau}} \leq 1$, where $k \leq -2$, we obtain
\begin{equation*}
\begin{split}
    \int_{A_{k}} \frac{|u(x)|^{\tau}}{x \ln^{\tau} \left(\frac{1}{x} \right)}  dx & \leq \frac{C^{\tau} \tau^{(1-s) \tau}}{(-k-1)^{\tau}}  [u]^{\tau}_{W^{s,p}(A_{k})} + \frac{C^{\tau}}{(-k-1)^{\tau}} |(u)_{A_{k}}|^{\tau} 
    \\ & \leq  C^{\tau} \tau^{(1-s) \tau} [u]^{\tau}_{W^{s,p}(A_{k})} +  \frac{C^{\tau}}{(-k-1)^{\tau}} |(u)_{A_{k}}|^{\tau}  .
\end{split}
\end{equation*}
Summing the above inequality from ~$k=m \in \mathbb{Z}^{-}$ to ~$-2$, we get
\begin{equation}\label{eqnn2}
\sum_{k=m}^{-2} \int_{A_{k}} \frac{|u(x)|^{\tau}}{x \ln^{\tau}\left( \frac{1}{x} \right)} dx \leq  C^{\tau}  \tau^{(1-s) \tau} \sum_{k=m}^{-2} [u]^{\tau}_{W^{s,p}(A_{k})} + C^{\tau} \sum_{k=m}^{-2}  \frac{|(u)_{A_{k}}|^{\tau}}{(-k-1)^{\tau}}     .
\end{equation}
Independently, using triangle inequality, we have 
\begin{equation*}
    |(u)_{A_{k}}|^\tau \leq  \left( |(u)_{A_{k+1}}| + |(u)_{A_{k}} - (u)_{A_{k+1}}| \right)^\tau.
\end{equation*}
For ~$\tau >1, ~ c>1$ and ~$a, ~ b \in \mathbb{R}$, we have (see ~\cite[Lemma 2.5]{adimurthi2023fractional})
    \begin{equation}\label{estimate}
        (|a| + |b|)^{\tau} \leq c|a|^{\tau} + (1-c^{\frac{-1}{\tau -1}})^{1-\tau} |b|^{\tau} .
        \end{equation}
For ~$k \in \mathbb{Z}^{-} \backslash \{-1\}$, applying the above inequality with ~$c := \left( \frac{-k-1}{-k-(3/2)} \right)^{\tau-1}>1$ with ~$sp=1$ and using Lemma ~\ref{avg} with ~$E=A_{k}$ and $F=A_{k+1}$ together with Lemma ~\ref{sobolev}, we obtain
\begin{equation*}
    |(u)_{A_{k}}|^{\tau} \leq \left( \frac{-k-1}{-k-(3/2)} \right)^{\tau-1} |(u)_{A_{k+1}}|^{\tau} + C^{\tau} \tau^{(1-s) \tau} (-k-1)^{\tau-1} [u]^{\tau}_{W^{s,p}(A_{k} \cup A_{k+1})}  .
\end{equation*}
Summing the above inequality from ~$k=m \in \mathbb{Z}^{-}$ to ~$-2$, we get
\begin{equation*}
   \sum_{k=m}^{-2}  \frac{|(u)_{A_{k}}|^{\tau}}{(-k-1)^{\tau-1}} \leq \sum_{k=m}^{-2} \frac{ |(u)_{A_{k+1}}|^{\tau}}{(-k-(3/2))^{\tau-1}} + C^{\tau} \tau^{(1-s) \tau} \sum_{k=m}^{-2} [u]^{\tau}_{W^{s,p}(A_{k} \cup A_{k+1})}  .
\end{equation*}
By changing sides, rearranging, and re-indexing, we get
\begin{equation*}
\begin{split}
    \frac{|(u)_{A_{m}}|^{\tau}}{(-m-1)^{\tau-1}} +  \sum_{k=m+1}^{-2} & \left\{ \frac{1}{(-k-1)^{\tau-1}} - \frac{1}{(-k-1/2)^{\tau-1}} \right\} |(u)_{A_{k}}|^{\tau}  \\ & \leq  2^{\tau-1} |(u)_{A_{-1}}|^{\tau} 
    + \ C^{\tau} \tau^{(1-s) \tau} \sum_{k=m}^{-2} [u]^{\tau}_{W^{s,p}(A_{k} \cup A_{k+1})}  .   
\end{split}
\end{equation*}
Now, for large values of ~$-k$,  using the asymptotics (see Lemma ~\ref{large n ineq})
\begin{equation*}
     \frac{1}{(-k-1)^{\tau-1}} - \frac{1}{(-k-1/2)^{\tau-1}} \sim \frac{1}{(-k-1)^{\tau}},
\end{equation*}
and  choose large $-m$ such that ~$ (u)_{A_{m}} = 0 $, we get
\begin{equation}\label{eqn10}
    \sum_{k=m}^{-2} \frac{|(u)_{A_{k}}|^{\tau}}{(-k-1)^{\tau}} \leq  C^{\tau}  |(u)_{A_{-1}}|^{\tau}  +  C^{\tau} \tau^{(1-s) \tau} \sum_{k=m}^{-2} [u]^{\tau}_{W^{s,p}(A_{k} \cup A_{k+1})}  .
\end{equation}
Combining ~\eqref{eqnn2} and ~\eqref{eqn10} yields
\begin{equation*}
    \sum_{k=m}^{-2} \int_{A_{k}} \frac{|u(x)|^{\tau}}{x \ln^{\tau} \left( \frac{1}{x} \right)}  dx \leq  C^{\tau}  |(u)_{A_{-1}}|^{\tau} +  C^{\tau} \tau^{(1-s) \tau} \sum_{k=m}^{-2} [u]^{\tau}_{W^{s,p}(A_{k} \cup A_{k+1})}  .
\end{equation*}
Also, ~$ |(u)_{A_{-1}}|^{\tau} \leq C^{\tau}  \|u\|^{\tau}_{L^{p}((0,1))}$. Hence, combining this in the above inequality and using $\tau^{(1-s) \tau} > 1$, we obtain
\begin{equation*}
     \sum_{k=m}^{-2} \int_{A_{k}} \frac{|u(x)|^{\tau}}{x \ln^{\tau} \left( \frac{1}{x} \right)}  dx \leq  C^{\tau} \tau^{(1-s) \tau} \Bigg( \|u\|^{\tau}_{L^{p}((0,1))} +   \sum_{k=m}^{-2} [u]^{\tau}_{W^{s,p}(A_{k} \cup A_{k+1})} \Bigg).
\end{equation*}
Applying the inequality ~\eqref{sumineq} with $\gamma = \frac{\tau}{p}$, we have
\begin{equation*}
\begin{split}
\sum_{k=m}^{-2}  [u]^{\tau}_{W^{s,p}(A_{k} \cup A_{k+1})} &=    \Bigg( \sum_{\substack{k =m \\ -k \ \text{is even}}}^{-2} [u]^{\tau}_{W^{s,p}(A_{k} \cup A_{k+1})} + \sum_{\substack{k =m \\ -k \ \text{is odd}}}^{-2} [u]^{\tau}_{W^{s,p}(A_{k} \cup A_{k+1})} \Bigg) \\ & \leq 2[u]^{\tau}_{W^{s,p}((0,1))}.
\end{split}
\end{equation*}
Therefore, combining the above two inequalities and using  ~$\tau \geq p$, we obtain
 \begin{equation*}
 \begin{split}
      \left( \int_{0}^{\frac{1}{2}} \frac{|u(x)|^{\tau}}{x \ln^{\tau}\left( \frac{1}{x} \right)}  dx \right)^{\frac{1}{\tau}} &\leq C \tau^{1-s} \left( [u]^{\tau}_{W^{s,p}((0,1))} + \|u\|^{\tau}_{L^{p}((0,1))} \right)^{\frac{1}{\tau}}  \\
       &\leq C \tau^{1-s} \left( [u]^{p}_{W^{s,p}((0,1))} + \|u\|^{p}_{L^{p}((0,1))} \right)^{\frac{1}{p}} .
    \end{split}
 \end{equation*}
 This proves the lemma.
\end{proof}

The next lemma proves Proposition ~\ref{prop} for the case $d \geq 2$ when the domain is ~$\mathbb{R}^{d}_{+}$ and the test functions are supported on cubes ~$\Omega_{n}:= (-n,n)^{d-1} \times (0,1)$, where $n \in \mathbb{N}$. This lemma is available in ~\cite{adimurthi2023fractional}. For the sake of completeness of this article, we provide a proof here, explicitly highlighting the dependence on the parameter $\tau$ in the resulting constant. 

\begin{lemma}\label{sp=d flat case}
     Let ~$sp=d, ~d\geq 2$ and ~$\tau \geq p$, there exist a constant ~$C= C(d,p) > 0$  such that
    \begin{equation}
     \left(  \int_{\Omega_{n}} \frac{|u(x)|^{\tau}}{x^{d}_{d} }  dx \right)^{\frac{1}{\tau}} \leq  C  \tau^{\frac{d-s}{d}}
 [u]_{W^{s,p}(\Omega_{n})}, \hspace{3mm} \forall \ u \in C^{1}_{c}(\Omega_n).
    \end{equation}
\end{lemma}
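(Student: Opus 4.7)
My strategy is to adapt the dyadic template of Lemma \ref{flat case sp=1}: chop $\Omega_n$ into horizontal slabs $A_k = (-n,n)^{d-1} \times [2^k, 2^{k+1})$ for $k \leq -1$, then further partition each $A_k$ into congruent cubes $\{Q_k^{\mathbf{j}}\}_{\mathbf{j}}$ of side $2^k$ (this extra sub-partition, absent in the $d=1$ case, is what keeps the final constant independent of $n$). Since each $Q_k^{\mathbf{j}}$ is a translate of $2^k\cdot(0,1)^d$, Lemma \ref{sobolev} applied with $\Omega = (0,1)^d$ and $\lambda = 2^k$ (using translation invariance of the seminorm) gives, for $\tau \geq p$,
\begin{equation*}
\left(\fint_{Q_k^{\mathbf{j}}} |u - (u)_{Q_k^{\mathbf{j}}}|^{\tau}\, dx\right)^{1/\tau} \leq C\,\tau^{(d-s)/d}\,[u]_{W^{s,p}(Q_k^{\mathbf{j}})},
\end{equation*}
with $C = C(d,p)$. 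Exploiting $1/x_d^d \leq 1/2^{kd} = 1/|Q_k^{\mathbf{j}}|$ on $A_k$, splitting $|u|^\tau \leq 2^\tau|u-(u)_{Q_k^{\mathbf{j}}}|^\tau + 2^\tau|(u)_{Q_k^{\mathbf{j}}}|^\tau$ on each cube, summing over $\mathbf{j}$ and aggregating the cube seminorms via \eqref{sumineq}, one arrives at
\begin{equation*}
\int_{A_k}\frac{|u|^\tau}{x_d^d}\,dx \leq 2^\tau C^\tau \tau^{(d-s)\tau/d}[u]^\tau_{W^{s,p}(A_k)} + 2^\tau\sum_{\mathbf{j}}|(u)_{Q_k^{\mathbf{j}}}|^\tau.
\end{equation*}

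The remaining work is to control the residual double sum $\sum_k\sum_{\mathbf{j}}|(u)_{Q_k^{\mathbf{j}}}|^\tau$. For this I would associate to each $Q_k^{\mathbf{j}}$ a parent cube $Q_{k+1}^{\mathbf{j}'}$ at the next scale, chosen so that the lateral projection of $Q_k^{\mathbf{j}}$ is contained in that of $Q_{k+1}^{\mathbf{j}'}$ (there are $2^{d-1}$ such children per parent). The union $Q_k^{\mathbf{j}}\cup Q_{k+1}^{\mathbf{j}'}$ is, up to translation, the $2^k$-dilate of a single bounded Lipschitz shape $\Sigma$ depending only on $d$, so Lemma \ref{avg} together with Lemma \ref{sobolev} applied with $\Omega = \Sigma$ produces
\begin{equation*}
|(u)_{Q_k^{\mathbf{j}}} - (u)_{Q_{k+1}^{\mathbf{j}'}}|^\tau \leq C^\tau\,\tau^{(d-s)\tau/d}\,[u]^\tau_{W^{s,p}(Q_k^{\mathbf{j}}\cup Q_{k+1}^{\mathbf{j}'})}.
\end{equation*}
Applying inequality \eqref{estimate} with a $k$-dependent splitting constant $c_k>1$, in the spirit of the $((-k-1)/(-k-3/2))^{\tau-1}$ choice made in the proof of Lemma \ref{flat case sp=1}, I telescope this up the parent chain to the top slab $A_{-1}$. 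The terminal averages at scale $2^{-1}$ are bounded by $\|u\|_{L^p(\Omega_n)}$, which by the boundary Hardy inequality \eqref{dyda} is dominated by $[u]_{W^{s,p}(\Omega_n)}$. The compact support of $u\in C_c^1(\Omega_n)$ guarantees that the averages on sufficiently deep slabs vanish, so the iteration terminates cleanly.

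Summing the slab bound over $k$ and using \eqref{sumineq} once more to pack the consecutive-slab seminorms $[u]^\tau_{W^{s,p}(A_k\cup A_{k+1})}$ into $[u]^\tau_{W^{s,p}(\Omega_n)}$ delivers the claimed inequality. The main technical hurdle is the telescoping step: because each parent accommodates $2^{d-1}$ children, the recurrence on $S_k := \sum_{\mathbf{j}}|(u)_{Q_k^{\mathbf{j}}}|^\tau$ picks up an unavoidable factor $2^{d-1}$ per level, and balancing this against the splitting constant $c_k$ in \eqref{estimate} so that the accumulated seminorm errors still sum to a universal multiple of $[u]^\tau_{W^{s,p}(\Omega_n)}$ is the delicate part. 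This is the analogue of, and more intricate than, the logarithmic telescoping trick available in the one-dimensional proof.
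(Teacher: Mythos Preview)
Your decomposition into slabs $A_k$ and cubes $Q_k^{\mathbf{j}}$, together with the application of Lemma~\ref{sobolev} on each cube, matches the paper exactly. The gap is in the telescoping step, and it is not merely ``delicate'' --- your proposed direction does not close.

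You telescope \emph{upward}, writing $|(u)_{Q_k^{\mathbf{j}}}|^\tau$ in terms of its parent $|(u)_{Q_{k+1}^{\mathbf{j}'}}|^\tau$ via \eqref{estimate} with a $k$-dependent $c_k>1$, as in the $d=1$ proof. But summing over children then gives $S_k \leq c_k\,2^{d-1}\,S_{k+1} + \text{error}$, and since $c_k>1$ and $2^{d-1}\geq 2$, the product $c_k 2^{d-1}>2$ cannot be made $\leq 1$: the recursion blows up geometrically and no choice of weights rescues it (in $d=1$ this worked only because $2^{d-1}=1$). Even granting the telescoping, your plan to control the terminal top-level sum $\sum_{\mathbf j}|(u)_{Q_{-1}^{\mathbf j}}|^\tau$ via \eqref{dyda} is unsafe: the constant in \eqref{dyda} depends on the domain, so applying it on $\Omega_n$ introduces possible $n$-dependence, defeating the purpose of the cube sub-partition.

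The paper instead telescopes \emph{downward}: it writes the parent in terms of a child, $|(u)_{Q_{k+1}^{\mathbf j'}}|^\tau \leq c\,|(u)_{Q_k^{\mathbf j}}|^\tau + \text{error}$, with the \emph{constant} choice $c=c_1 2^{d-1}$ where $c_1=\tfrac{2}{1+2^{d-1}}<1$. Summing this inequality over the $2^{d-1}$ children of each parent puts $2^{d-1}$ copies of the parent on the left, so after dividing one gets $S_{k+1}\leq c_1 S_k + C^\tau\tau^{(d-s)\tau/d}[u]^\tau_{W^{s,p}(A_k\cup A_{k+1})}$ --- a genuine contraction. Summing in $k$ and rearranging, the only boundary term is $S_m$ at the \emph{deepest} level, which vanishes by compact support; no appeal to \eqref{dyda} or to any $L^p$ bound on top-level averages is needed, and the result comes out with the pure seminorm on the right. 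The ``delicate balancing'' you anticipate is thus entirely absent once the telescope is reversed.
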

\begin{proof}
 For each ~$k \leq -1$, set
\begin{equation*}
    A_{k} := \{ (x',x_{d}) : \  x' \in (-n,n)^{d-1}, \ 2^{k} \leq x_{d} < 2^{k+1} \} .
\end{equation*}
Then, we have $ \Omega_{n}= \bigcup_{k = - \infty}^{-1} A_{k}$. Again, we  further divide each ~$A_{k}$ into disjoint cubes each of side length ~$2^k$ (say ~$A^{i}_{k}$). Then $A_{k} = \bigcup_{i=1}^{\sigma_{k} } A^{i}_{k}$,
 where ~$\sigma_{k} =  2^{(-k+1)(d-1)} n^{d-1}$. Fix any  ~$A^{i}_{k}$. Then ~$A^{i}_{k}$ is a translation of ~$(2^{k}, 2^{k+1})^{d}$. Applying Lemma ~\ref{sobolev} with ~$\Omega = (1,2)^d$, and ~$ \lambda = 2^k$ and using translation invariance, we have
\begin{equation*}
    \fint_{A^{i}_{k}} |u(x)-(u)_{A^{i}_{k}}|^{\tau}  dx \leq C^{\tau} \tau^{\frac{(d-s) \tau}{d}} [u]^{\tau}_{W^{s,p}(A^{i}_{k})}  ,
\end{equation*}
where ~$C= C(d,p)$ is a positive constant. Let ~$x = (x',x_d) \in A^{i}_{k}$.  Then ~$x_d \geq 2^k$ which implies ~$ \frac{1}{x_{d}} \leq \frac{1}{2^{k}}$. Therefore, using this and previous inequality, we have
\begin{equation*}
\begin{split}
    \int_{A^{i}_{k}} \frac{|u(x)|^{\tau}}{x^{d}_{d}}  dx 
   & \leq \frac{2^{\tau}}{2^{k d}} \int_{A^{i}_{k}} |u(x)-(u)_{A^{i}_{k}}|^{\tau}   dx + \frac{2^{\tau}}{2^{k d}} \int_{A^{i}_{k}} |(u)_{A^{i}_{k}}|^{\tau}dx \\ & \leq C^{\tau} \tau^{\frac{(d-s) \tau}{d}} [u]^{\tau}_{W^{s,p}(A^{i}_{k})} + 2^{\tau} |(u)_{A^{i}_{k}}|^{\tau} . 
\end{split}
    \end{equation*}
Summing the above inequality from ~$i=1$ to ~$\sigma_{k}$ and using the inequality ~\eqref{sumineq} with $\gamma= \frac{\tau}{p}$, we obtain
\begin{equation*}
\begin{split}
    \int_{A_{k}} \frac{|u(x)|^{\tau}}{x^{d}_{d}}  dx & \leq C^{\tau} \tau^{\frac{(d-s) \tau}{d}}   \sum_{i=1}^{\sigma_{k}} [u]^{\tau}_{W^{s,p}(A^{i}_{k})} + 2^{\tau}  \sum_{i=1}^{\sigma_{k}} |(u)_{A^{i}_{k}}|^{\tau}  \\ &
    \leq C^{\tau} \tau^{\frac{(d-s) \tau}{d}} [u]^{\tau}_{W^{s,p}(A_{k})} +  2^{\tau}  \sum_{i=1}^{\sigma_{k}} |(u)_{A^{i}_{k}}|^{\tau}.
\end{split}
\end{equation*}
Again, summing the above inequality from ~$k=m \in \mathbb{Z}^{-}$ to ~$-1$, we get
\begin{equation}\label{eqnn1}
\sum_{k=m}^{-1} \int_{A_{k}} \frac{|u(x)|^{\tau}}{x^{d}_{d}}  dx \leq  C^{\tau} \tau^{\frac{(d-s) \tau}{d}} \sum_{k=m}^{-1} [u]^{\tau}_{W^{s,p}(A_{k})} + 2^{\tau} \sum_{k=m}^{-1}  \sum_{i=1}^{\sigma_{k}} |(u)_{A^{i}_{k}}|^{\tau}  ,
\end{equation} 
where ~$C$ is a positive constant depends on ~$d$ and ~$p$. Independently, using triangle inequality, we have 
\begin{equation*}
    |(u)_{A^{j}_{k+1}}|^\tau \leq  \left( |(u)_{A^{i}_{k}}| + |(u)_{A^{i}_{k}} - (u)_{A^{j}_{k+1}}| \right)^\tau.
\end{equation*}
For ~$k \in \mathbb{Z}^{-} \backslash \{-1 \}$, applying \eqref{estimate} with ~$c :=c_{1}2^{d-1}>1$ where ~$c_{1} = \frac{2}{1+ 2^{d-1}} <1$ and using Lemma ~\ref{avg} with ~$E=A^{i}_{k}$ and $F=A^{j}_{k+1}$ with ~$sp=d$ together with Lemma ~\ref{sobolev}, we obtain
\begin{equation*}
    |(u)_{A^{j}_{k+1}}|^{\tau} \leq c_{1} 2^{d-1} |(u)_{A^{i}_{k}}|^{\tau} + C^{\tau} \tau^{\frac{(d-s) \tau}{d}} [u]^{\tau}_{W^{s,p}(A^{i}_{k} \cup A^{j}_{k+1})}  .
\end{equation*}
Since, there are ~$2^{d-1}$ such ~$A^{i}_{k}$'s cubes lies below the cube ~$A^{j}_{k+1}$. Therefore, summing the above inequality from ~$i=2^{d-1}(j-1)+1$ to ~$2^{d-1}j$, we obtain
\begin{equation*}
\begin{split}
     2^{d-1}  |(u)_{A^{j}_{k+1}}|^{\tau} & \leq c_{1} 2^{d-1}  \sum_{i=2^{d-1}(j-1)+1}^{2^{d-1}j}  |(u)_{A^{i}_{k}}|^{\tau} 
    \\  & \hspace{4mm}  + \ C^{\tau} \tau^{\frac{(d-s) \tau}{d}}  \sum_{i=2^{d-1}(j-1)+1}^{2^{d-1}j} [u]^{\tau}_{W^{s,p}(A^{i}_{k} \cup A^{j}_{k+1})}  .   
\end{split}
\end{equation*}
Again, summing the above inequality from ~$j=1$ to ~$\sigma_{k+1}$ and using the fact that
\begin{equation*}
  \sum_{j=1}^{\sigma_{k+1}} \Bigg(  \sum_{i=2^{d-1}(j-1)+1}^{2^{d-1}j}  |(u)_{A^{i}_{k}}|^{\tau} \Bigg)  = \sum_{i=1}^{\sigma_{k}}  |(u)_{A^{i}_{k}}|^{\tau},
\end{equation*}
we obtain
\begin{eqnarray*}
  \sum_{j=1}^{\sigma_{k+1}}  |(u)_{A^{j}_{k+1}}|^{\tau} &\leq& c_{1}  
\sum_{i=1}^{\sigma_{k}} |(u)_{A^{i}_{k}}|^{\tau} \\ && \hspace{3mm}  + \ C^{\tau} \tau^{\frac{(d-s) \tau}{d}} \sum_{j=1}^{\sigma_{k+1}} \Bigg( \sum_{i=2^{d-1}(j-1)+1}^{2^{d-1}j}  [u]^{\tau}_{W^{s,p}(A^{i}_{k} \cup A^{j}_{k+1})} \Bigg) \\ &\leq& c_{1}  
\sum_{i=1}^{\sigma_{k}} |(u)_{A^{i}_{k}}|^{\tau} 
  + \ C^{\tau} \tau^{\frac{(d-s) \tau}{d}} [u]^{\tau}_{W^{s,p}(A_{k} \cup A_{k+1})} .
\end{eqnarray*}
Summing the above inequality from ~$k=m \in \mathbb{Z}^{-}$ to ~$-2$, we get
\begin{equation*}
   \sum_{k=m}^{-2}  \sum_{j=1}^{\sigma_{k+1}}  |(u)_{A^{j}_{k+1}}|^{\tau}   \leq c_{1} \sum_{k=m}^{-2}  \sum_{i=1}^{\sigma_{k}} |(u)_{A^{i}_{k}}|^{\tau}  + C^{\tau} \tau^{\frac{(d-s) \tau}{d}}  \sum_{k=m}^{-2} [u]^{\tau}_{W^{s,p}(A_{k} \cup A_{k+1})}  .
\end{equation*}
By changing sides, rearranging, and re-indexing, we get
\begin{equation*}
 (1-c_{1}) \sum_{k=m+1}^{-1}  \sum_{i=1}^{\sigma_{k}} |(u)_{A^{i}_{k}}|^{\tau}  \leq   \sum_{j=1}^{\sigma_{m}} |(u)_{A^{j}_{m}}|^{\tau} +  C^{\tau} \tau^{\frac{(d-s) \tau}{d}} \sum_{k=m}^{-2} [u]^{\tau}_{W^{s,p}(A_{k} \cup A_{k+1})}  .
\end{equation*}
Choose ~$-m$ large enough such that ~$|(u)_{A^{j}_{m}}|=0$ for all ~$j \in \{ 1, \dots, \sigma_{m} \}$. Therefore, we have
\begin{equation}\label{eqnn}
    (1-c_{1})   \sum_{k=m}^{-1}  \sum_{i=1}^{\sigma_{k}} |(u)_{A^{i}_{k}}|^{\tau} \leq C^{\tau} \tau^{\frac{(d-s) \tau}{d}} \sum_{k=m}^{-2} [u]^{\tau}_{W^{s,p}(A_{k} \cup A_{k+1})}  .
\end{equation}
Combining ~\eqref{eqnn1} and ~\eqref{eqnn} and applying the inequality ~\eqref{sumineq} with ~$\gamma = \frac{\tau}{p}$ yields
\begin{equation*}
\begin{split}
    \sum_{k=m}^{-1} \int_{A_{k}} & \frac{|u(x)|^{\tau}}{x^{d}_{d} }   dx \leq  C^{\tau} \tau^{\frac{(d-s) \tau}{d}} \sum_{k=m}^{-2} [u]^{\tau}_{W^{s,p}(A_{k} \cup A_{k+1})} \\ &=  C^{\tau} \tau^{\frac{(d-s) \tau}{d}} \Bigg( \sum_{\substack{k =m \\ -k \ \text{is even}}}^{-2} [u]^{\tau}_{W^{s,p}(A_{k} \cup A_{k+1})}   + \sum_{\substack{k =m \\ -k \ \text{is odd}}}^{-2} [u]^{\tau}_{W^{s,p}(A_{k} \cup A_{k+1})} \Bigg) \\ &\leq 2 C^{\tau} \tau^{\frac{(d-s) \tau}{d}} [u]^{\tau}_{W^{s,p}(\Omega_{n})}  .
\end{split}
\end{equation*}
Therefore, we have
\begin{equation*}
    \sum_{k=m}^{-1} \int_{A_{k}} \frac{|u(x)|^{\tau}}{x^{d}_{d} }  dx  \leq C^{\tau} \tau^{\frac{(d-s) \tau}{d}} [u]^{\tau}_{W^{s,p}(\Omega_{n})}.
\end{equation*}
This proves the lemma.
\end{proof}

\smallskip

Using patching techniques along with Lemma ~\ref{flat case sp=1} and Lemma ~\ref{sp=d flat case}, Proposition ~\ref{prop} can be established (refer to ~\cite{AdiPurbPro2023} and ~\cite[Section 4]{adimurthi2023fractional}). For simplicity, we considered ~$\Omega = (0,1)$ for the case ~$d=1$ and proved Proposition ~\ref{prop}. For general interval $\Omega$ when ~$d=1$, the following proposition follows from translation and dilation of the interval ~$(0,1)$. Moreover, the positive constant $C$ in Proposition ~\ref{prop} does not depend on $\tau$. Therefore, we present the following proposition:

\begin{proposition}\label{prop}
 \normalfont   Let ~$\Omega$ be a bounded Lipschitz domain in $\mathbb{R}^{d}$ and ~$\sup_{x \in \Omega} \delta_{\Omega}(x) =R>0$. Suppose ~$p>1, ~ \tau \geq p, ~ s \in (0,1)$ such that ~$sp=d$. Then there exists a positive constant $C$ does not depend on ~$\tau$ such that
    \begin{equation}
      \left(  \int_{\Omega} X_{d}(u)^{\tau} \frac{dx}{\delta^{d}_{\Omega}(x)} \right)^{\frac{1}{\tau}} \leq C\tau^{\frac{d-s}{d}} [u]_{W^{s,p}(\Omega)} , \hspace{.3cm} \ \forall \ u \in W^{s,p}_{0}(\Omega),
    \end{equation}
    where $X_{d}(u)$ is defined in \eqref{condition on Xd}.
     \end{proposition}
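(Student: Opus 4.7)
The plan is to deduce the proposition from the two flat-model estimates, Lemma~\ref{flat case sp=1} (for $d=1$ on a half-interval) and Lemma~\ref{sp=d flat case} (for $d\geq 2$ on a cube sitting on a hyperplane), by a standard boundary-flattening partition-of-unity patching. The central requirement is that the accumulated constants must remain independent of $\tau$, so that the sharp power $\tau^{(d-s)/d}$ is preserved.

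For the case $d\geq 2$, since $\Omega$ is a bounded Lipschitz domain, I would choose a finite open cover $\{U_j\}_{j=1}^{N}$ of $\partial\Omega$ such that on each $U_j$ a bi-Lipschitz map $\Phi_j$ straightens $\partial\Omega\cap U_j$ onto a piece of the hyperplane $\{x_d=0\}$, and an interior open set $U_0\subset\subset\Omega$ so that $\{U_j\}_{j=0}^{N}$ covers $\overline{\Omega}$. Fix a smooth partition of unity $\{\eta_j\}$ subordinate to this cover and write $u=\sum_j \eta_j u$. On $U_0$ the weight $\delta_\Omega^{-d}$ is bounded, so
\[
\int_{U_0}\frac{|\eta_0 u|^{\tau}}{\delta_\Omega^d}\,dx\ \lesssim\ \|u\|_{L^{\tau}(\Omega)}^{\tau},
\]
and Lemma~\ref{lemma on tau < p} gives the desired bound with the correct $\tau^{(d-s)/d}$ factor. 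On each boundary patch $U_j$ the Lipschitz change of variables $\Phi_j$ turns $\delta_\Omega$ into a function comparable to $x_d$ and changes the Gagliardo seminorm only by a multiplicative constant depending on $\Phi_j$ (not on $\tau$); since $(\eta_j u)\circ\Phi_j^{-1}$ is compactly supported in a cube of the form $\Omega_n$, Lemma~\ref{sp=d flat case} applies directly and yields the boundary contribution with the required power of $\tau$. Summing the finitely many patches (there is no dependence of the number of patches on $\tau$) and using the inequality~\eqref{sumineq} with exponent $\tau/p$ to absorb the sums into a single seminorm bound gives the result.

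For $d=1$, write $\Omega=(a,b)$ and split it as $(a,\tfrac{a+b}{2})\cup(\tfrac{a+b}{2},b)$. An affine change of variables turns the left half into $(0,1/2)\subset(0,1)$ and the right half similarly after reflection; Lemma~\ref{flat case sp=1} then controls
\[
\int_{a}^{(a+b)/2}\frac{|u(x)-(u)_\Omega|^{\tau}}{(x-a)\,\ln^{\tau}\!\bigl(\tfrac{2R}{x-a}\bigr)}\,dx
\]
by $C^{\tau}\tau^{(1-s)\tau}\|u-(u)_\Omega\|_{W^{s,p}(\Omega)}^{\tau}$, where I apply the lemma to $u-(u)_\Omega$ rather than to $u$ itself so that the zero-average assumption used inside the flat-case proof is harmless. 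The symmetric half is identical. Noting that $\delta_\Omega(x)\sim\min(x-a,b-x)$ and that $\ln(2R/\delta_\Omega)$ is comparable to $\ln(2R/(x-a))$ (up to additive constants absorbed into the constant) on each half, the integrals add to the desired left-hand side. The $L^p$ part of the $W^{s,p}$ norm produced by Lemma~\ref{flat case sp=1} is then controlled by the Gagliardo seminorm through the fractional Poincar\'e inequality~\eqref{poincare}, giving the final form with $[u]_{W^{s,p}(\Omega)}$ only.

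The main technical point, and the only thing that distinguishes this from a routine patching, is tracking uniformity in $\tau$: every constant produced by the flattening diffeomorphisms, the partition of unity, the covering, and Lemma~\ref{avg}/\eqref{estimate} for comparing local averages must be independent of $\tau$, so that after raising to the $1/\tau$ power all $C^{\tau}$ factors collapse to a single $C$ while the $\tau^{(d-s)\tau/d}$ factor collapses to exactly $\tau^{(d-s)/d}$. This is precisely why Lemma~\ref{flat case sp=1} and Lemma~\ref{sp=d flat case} are stated with the explicit $\tau$-dependence made visible in their proofs, rather than being quoted as they appear in \cite{AdiPurbPro2023,adimurthi2023fractional}.
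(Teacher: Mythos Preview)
Your proposal is correct and matches the paper's own argument: the paper likewise deduces Proposition~\ref{prop} from Lemma~\ref{flat case sp=1} and Lemma~\ref{sp=d flat case} via the standard boundary--flattening/partition--of--unity patching (referring to \cite{AdiPurbPro2023,adimurthi2023fractional} for the routine details) and stresses, exactly as you do, that the only nontrivial point is that every constant produced along the way is independent of~$\tau$. One small wrinkle in the $d=1$ step: Lemma~\ref{flat case sp=1} is stated for $v\in C^{1}_{c}((0,1))$, and $u-(u)_{\Omega}$ is not compactly supported; the fix is either to observe that the proof of that lemma only needs $(v)_{A_{m}}$ bounded (the term $|(v)_{A_{m}}|^{\tau}/(-m-1)^{\tau-1}$ still vanishes as $m\to-\infty$), or to apply the lemma to $u$ itself and absorb the constant $(u)_{\Omega}$ using $\int_{0}^{1/2} x^{-1}\ln^{-\tau}(1/x)\,dx<\infty$ together with Poincar\'e.
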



\section{Proof of Theorem ~\ref{main result}}\label{proof of main result}
In this section we will prove Theorem ~\ref{main result}. We will establish a Trudinger-type inequality in the context of fractional boundary Hardy inequalities for the case ~$sp=d$ using the fact that the constant $C>0$ available in Proposition ~\ref{prop} is independent of ~$\tau$.  First we will establish the existence of ~$\alpha_{0}$ in Subsection ~\ref{subsec1}, then we will prove that ~\eqref{main inequality} fails for any ~$\alpha> \alpha^{*}_{d}$ in Subsection ~\ref{subsec2}. The existence of ~$\alpha_0$ in Theorem ~\ref{main result} is comparable to a result found in G. Leoni's book ~\cite[Theorem 7.19]{leonibook}, where he considers the space $W^{s,p}(\mathbb{R}^d \setminus \{ 0 \})$ with $sp = d$ and establishes a fractional version of Trudinger-type inequality using $\Phi_{n_{0}}$.

\smallskip

\subsection{Existence of \texorpdfstring{$\alpha_{0}$}{alpha not} in Theorem ~\ref{main result}}\label{subsec1}
Let ~$X_{d}(u)$ be defined in ~\eqref{condition on Xd}. By Proposition ~\ref{prop}, for every ~$\tau \geq p$ and ~$u \in W^{s,p}_{0}(\Omega)$ such that ~$[u]_{W^{s,p}(\Omega)} \leq 1$, we have the following inequality:
\begin{equation*}
    \left(  \int_{\Omega} X_{d}(u)^{\tau} \frac{dx}{\delta^{d}_{\Omega}(x)} \right)^{\frac{1}{\tau}} \leq C\tau^{\frac{d-s}{d}},
\end{equation*}
where ~$C= C(d,p,s, \Omega)$ is a positive constant. Now, for ~$\alpha > 0$, using  ~$\frac{nd}{d-s} \geq p = \frac{d}{s}$ for all ~$n \geq n_{0}$ as ~$n_{0} \geq \frac{d}{s}-1 > n_{0}-1 $ and from the definition of ~$\Phi_{n_{0}}$ defined in ~\eqref{notation}, we obtain
\begin{equation*}
\begin{split}
\int_{\Omega}  \Phi_{n_{0}} \left( \alpha X_{d}(u)^{\frac{d}{d-s}} \right)  \frac{dx}{\delta^{d}_{\Omega}(x)} & = \sum_{n=n_{0}}^{\infty} \frac{\alpha^{n}}{n!} \int_{\Omega} X_{d}(u)^{\frac{nd}{d-s}} \frac{dx}{\delta^{d}_{\Omega}(x)} \\ &  \leq \sum_{n=n_{0}}^{\infty} \frac{1}{n!} \left( \alpha C^{\frac{d}{d-s}}\frac{d}{d-s}  \right)^{n} n^{n}.
\end{split}
\end{equation*}
Using Stirling's approximation ~$n! \sim \sqrt{2 \pi n} \left( \frac{n}{e} \right)^{n}$ as ~$n \to \infty$, applying this approximation to the sum leads to the conclusion that there exists a sufficiently small ~$\alpha_0>0$, such that
\begin{equation*}
   \sup \left\{ \int_{\Omega} \Phi_{n_{0}} \left( \alpha X_{d}(u)^{\frac{d}{d-s}} \right) \frac{dx}{\delta^{d}_{\Omega}(x)} \ \bigg| \ u \in W^{s,p}_{0}(\Omega), \ [u]_{W^{s,p}(\Omega)} \leq 1 \right\}  < \infty  , \ \forall \ \alpha \in [0, \alpha_{0}).
\end{equation*}
This establishes the existence of ~$\alpha_{0}$ in Theorem ~\ref{main result}.

\bigskip

\noindent  \textbf{Proof of Corollary ~\ref{corollary 1}:}  We will now prove Corollary ~\ref{corollary 1}, which follows from  ~\eqref{main inequality} in Theorem ~\ref{main result}. Let ~$u \in W^{s,p}_{0}(\Omega)$ and $\alpha>0$. Since, ~$C^{\infty}_{c}(\Omega)$ is dense in ~$W^{s,p}_{0}(\Omega)$, there exists ~$v \in C^{\infty}_{c}(\Omega)$ and ~$w \in W^{s,p}_{0}(\Omega)$ such that ~$u:= v+ w$ and
    \begin{equation*}
        [u-v]_{W^{s,p}(\Omega)} = [w]_{W^{s,p}(\Omega)} \leq \frac{1}{2} \left( \frac{\alpha_{0}}{2\alpha} \right)^{\frac{d-s}{d}},
    \end{equation*} 
    where ~$\alpha_{0}$ is an defined in ~\eqref{main inequality}. Hence, applying the inequality
    \begin{equation*}
  X_{d}(u)^{\frac{nd}{d-s}} =  X_{d}(v+w)^{\frac{nd}{d-s}} \leq 2^{\frac{nd}{d-s}-1} X_{d}(v)^{\frac{nd}{d-s}} +  2^{\frac{nd}{d-s}-1} X_{d}(w)^{\frac{nd}{d-s}},   
    \end{equation*}
     where ~$  n \geq n_{0}$, we deduce that
    \begin{equation*}
        \Phi_{n_{0}} \left( \alpha X_{d}(u)^{\frac{d}{d-s}}  \right)  \leq \frac{1}{2} \Phi_{n_{0}} \left( 2^{\frac{d}{d-s}} \alpha   X_{d}(v)^{\frac{d}{d-s}} \right) + \frac{1}{2} \Phi_{n_{0}} \left( 2^{\frac{d}{d-s}} \alpha   X_{d}(w)^{\frac{d}{d-s}} \right).
    \end{equation*}
    Using  ~$[w]_{W^{s,p}(\Omega)} \leq \frac{1}{2} \left( \frac{\alpha_{0}}{2\alpha} \right)^{\frac{d-s}{d}}$ and multiplying the above inequality by $\frac{1}{\delta^{d}_{\Omega}(x)}$, we obtain
    \begin{equation}\label{eqn 113}
    \begin{split}
        \Phi_{n_{0}} \left( \alpha X_{d}(u)^{\frac{d}{d-s}} \right) \frac{1}{\delta^{d}_{\Omega}(x)} & \leq  \frac{1}{2} \Phi_{n_{0}} \left( 2^{\frac{d}{d-s}} \alpha   X_{d}(v)^{\frac{d}{d-s}} \right) \frac{1}{\delta^{d}_{\Omega}(x)} \\ & \hspace{4mm}  + \frac{1}{2} \Phi_{n_{0}} \left( \frac{\alpha_{0}}{2} \left( \frac{X_{d}(w)}{[w]_{W^{s,p}(\Omega)}} \right)^{\frac{d}{d-s}}   \right) \frac{1}{\delta^{d}_{\Omega}(x)}.  
    \end{split}
    \end{equation}
    Assume ~$d \geq 2$. Since, ~$v \in C^{\infty}_{c}(\Omega)$ which imply ~$supp(v)$ is compact. Therefore, there exists $C>0$ and $E \subset \Omega$ compact set such that $|v(x)| \leq C \chi_{E}(x)$ for all $x \in \Omega$, where $\chi_{E}$ is a characteristic function on $E$. We arrive at
\begin{equation*}
\begin{split}
    \Phi_{n_{0}} \left( 2^{\frac{d}{d-s}} \alpha   X_{d}(v)^{\frac{d}{d-s}} \right) \frac{1}{\delta^{d}_{\Omega}(x)}  &= \sum_{n=n_{0}}^{\infty} \frac{\alpha^{n}}{n!} (2|v(x)|)^{\frac{nd}{d-s}} \frac{1}{\delta^{d}_{\Omega}(x)} \\ & \leq \left( \sum_{n=n_{0}}^{\infty} \frac{\alpha^{n}}{n!} (2C)^{\frac{nd}{d-s}} \right) \frac{\chi_{E}(x)}{\delta^{d}_{\Omega}(x)}  \in L^{1}(\Omega).
\end{split}
\end{equation*}
Therefore, combining the above two estimates and using ~\eqref{main inequality} in Theorem ~\ref{main result} for the function ~$\frac{w}{[w]_{W^{s,p}(\Omega)}} \in W^{s,p}_{0}(\Omega)$, we conclude that for ~$d \geq 2$,
\begin{equation*}
    \Phi_{n_{0}} \left( \alpha X_{d}(u)^{\frac{d}{d-s}} \right) \frac{1}{\delta^{d}_{\Omega}(x)} \in L^{1}(\Omega). 
\end{equation*}
Now assume the case ~$d=1$. Since, $supp(v)$ is compact, there exists a constant $C>0$ such that $|v(x)-(v)_{\Omega}|^{\frac{1}{1-s}} \leq C$ for all $x \in \Omega$. Using this, we obtain
\begin{equation*}
\begin{split}
    \Phi_{n_{0}} \left( 2^{\frac{1}{1-s}} \alpha   X_{1}(v)^{\frac{1}{1-s}} \right) \frac{1}{\delta_{\Omega}(x)} &= \sum_{n=n_{0}}^{\infty} \frac{\left(2^{\frac{1}{1-s}} \alpha \right)^{n}}{n!}  \left( \frac{|v(x)-(v)_{\Omega}|}{\ln \left( \frac{2R}{\delta_{\Omega}(x)} \right)} \right)^{\frac{n}{1-s}} \frac{1}{\delta_{\Omega}(x)} \\ &\leq \sum_{n=n_{0}}^{\infty} \frac{\left(2^{\frac{1}{1-s}} \alpha C \right)^{n}}{n!}  \left( \frac{1}{\ln^{\frac{n}{1-s}} \left( \frac{2R}{\delta_{\Omega}(x)} \right)  } \right) \frac{1}{\delta_{\Omega}(x)} \\ &\leq \left( \sum_{n=n_{0}}^{\infty} \frac{\left( 2^{\frac{1}{1-s}} \alpha C \right)^{n}}{n!}  \right) \frac{1}{\ln^{\frac{1}{1-s}} \left( \frac{2R}{\delta_{\Omega}(x)} \right) \delta_{\Omega}(x) } .
\end{split}
\end{equation*}
Using the fact that $\frac{1}{\ln^{\frac{1}{1-s}} \left( \frac{2R}{\delta_{\Omega}(x)} \right) \delta_{\Omega}(x) } \in L^{1}(\Omega)$, we conclude 
\begin{equation*}
    \Phi_{n_{0}} \left( 2^{\frac{1}{1-s}} \alpha   X_{1}(v)^{\frac{1}{1-s}} \right) \frac{1}{\delta_{\Omega}(x)} \in L^{1}(\Omega).
\end{equation*}
Therefore, using this and ~\eqref{main inequality} in Theorem ~\ref{main result} for the function $\frac{w}{[w]_{W^{s,p}(\Omega)}} \in W^{s,p}_{0}(\Omega)$ to \eqref{eqn 113}, we conclude that
\begin{equation*}
     \Phi_{n_{0}} \left( \alpha X_{1}(u)^{\frac{1}{1-s}} \right) \frac{1}{\delta_{\Omega}(x)} \in L^{1}(\Omega). 
\end{equation*}
This finishes the proof of corollary.


\subsection{Existence of \texorpdfstring{$\alpha^{*}_{d}$}{alpha star} in Theorem ~\ref{main result}}\label{subsec2}  Define the family of Moser-type functions $u_{\epsilon}: \mathbb{R}^{d} \to \mathbb{R}$ by
\begin{equation}\label{moser-type fn}
   u_{\epsilon} (x) = \begin{dcases}
        |\ln \epsilon|^{\frac{d-s}{d}} , & |x| \leq \epsilon \\ 
       \frac{|\ln|x||}{|\ln \epsilon|^{\frac{s}{d}}}, & \epsilon < |x|<1  \\
       0, & |x| \geq 1.
    \end{dcases}
\end{equation}
\fbox{Case $d=1$} S. Iula in \cite[Proposition $2.2$]{lula2017} considered the above Moser-type functions \eqref{moser-type fn} for the case $sp=d=1$ and established that
 \begin{equation}\label{conv u epsilon d=1}
     [u_{\epsilon}]_{W^{s,p}(\mathbb{R})} \to   \left( 8 \Gamma(p+1) \sum_{n=0}^{\infty} \frac{1}{(1+2n)^{p}} \right)^{s} =: \gamma^{s}_{s}, \hspace{3mm} \text{as} \hspace{3mm} \epsilon \to 0.
 \end{equation}
For simplicity, we considered the interval ~$I := (-1,1)$ with ~$ \underset{x \in I}{\sup} \  \delta_{(-1,1)}(x)=R=1$ and establish that ~\eqref{main inequality} fails for any ~$\alpha> \alpha^{*}_{1}$. For general intervals, the result follows from dilation and translation of the interval. The Moser-type functions ~$u_{\epsilon}$ restricted to ~$I$, denoted as  ~$ u_{\epsilon} \big|_{I}$ and belongs to ~$W^{s,p}(I) = W^{s,p}_{0}(I)$  (see ~\cite[Theorem 6.78]{leonibook}).  Moreover, ~$(u_{\epsilon})_{I} \to 0$ as $\epsilon \to 0$. In particular, we have 
\begin{equation*}
\begin{split}
      (u_{\epsilon})_{I} =  \frac{1}{2} \int_{I} u_{\epsilon}(x)dx & = \int_{0}^{1} u_{\epsilon}(x) dx  = \epsilon |\ln \epsilon|^{1-s} + \frac{1}{|\ln \epsilon|^{s}} \int_{\epsilon}^{1} |\ln x| dx \\ & = \epsilon |\ln \epsilon|^{1-s} + \frac{1}{|\ln \epsilon |^{s}} \left(  1- \epsilon |\ln \epsilon| - \epsilon \right)  = \frac{1}{|\ln \epsilon|^{s}} (1- \epsilon).   
\end{split}
     \end{equation*}
     Therefore, 
     \begin{equation*}
         (u_{\epsilon})_{I} = \frac{1}{|\ln \epsilon|^{s}} (1- \epsilon) \to 0 \hspace{3mm} \text{as} \hspace{3mm} \epsilon \to 0.
     \end{equation*}
 We observe that $|\ln \epsilon| \to \infty$ as $\epsilon \to 0$. Therefore, using this and the fact that  $(u_{\epsilon})_{I} \to 0$ as $\epsilon \to 0$, for sufficiently small $\epsilon>0$, we have
\begin{equation*}
    | |\ln \epsilon|^{1-s} - (u_{\epsilon})_{I} | \geq \ln \left( \frac{2}{1- \epsilon} \right) |\ln \epsilon|^{1-s}.
\end{equation*} 
Also, $\ln \left( \frac{2}{1-x}  \right) < \ln \left( \frac{2}{1- \epsilon} \right)$ for all $x \in (0, \epsilon)$. Therefore, using this and the above inequality, we obtain
\begin{equation}\label{eqn 110}
 \frac{|u_{\epsilon}(x) - (u_{\epsilon})_{I} |}{\ln \left( \frac{2}{1-x}  \right)} =   \frac{||\ln \epsilon|^{1-s} - (u_{\epsilon})_{I} |}{\ln \left( \frac{2}{1-x}  \right)}   \geq  \frac{||\ln \epsilon|^{1-s} - (u_{\epsilon})_{I} |}{\ln \left( \frac{2}{1-\epsilon}  \right)} \geq  |\ln \epsilon|^{1-s},
\end{equation}
for all $x \in (0, \epsilon)$. 

\smallskip

Set $v_{\epsilon} := \frac{u_{\epsilon}}{[u_{\epsilon}]_{W^{s,p}(\mathbb{R})}}$. Observe that ~$[v_{\epsilon}]_{W^{s,p}(\mathbb{R})} = 1$ for all ~$\epsilon > 0$. Therefore, 
\begin{equation*}
    [v_{\epsilon}]_{W^{s,p}(I)} < [v_{\epsilon}]_{W^{s,p}(\mathbb{R})} = 1,
\end{equation*}
and ~$v_{\epsilon} \big|_{I} \in W^{s,p}(I) = W^{s,p}_{0}(I)$ (see ~\cite[Theorem 6.78]{leonibook}). From Corollary ~\ref{corollary 1} and Lemma ~\ref{lemma on tau < p} with $\frac{n}{1-s}<p$, where $1 \leq n< n_{0}$ and $n_{0} \geq p-1>n_{0}-1$, we deduce that for any $\alpha>0$,
\begin{equation*}
\begin{split}
    \bigintsss_{I} \Phi_{n_{0}} \Bigg( &  \alpha \frac{|v_{\epsilon}(x) - (v_{\epsilon})_{I}|^{\frac{1}{1-s}}}{\ln^{\frac{1}{1-s}} \left( \frac{2}{\delta_{I}(x)}  \right)} \Bigg) \frac{dx}{\delta_{I}(x)} \geq  \bigintsss_{I} \Phi_{n_{0}}  \Bigg(  \alpha \frac{|v_{\epsilon}(x) - (v_{\epsilon})_{I}|^{\frac{1}{1-s}}}{\ln^{\frac{1}{1-s}} \left( \frac{2}{\delta_{I}(x)}  \right)} \Bigg) dx \\ & =   \bigintsss_{I} \exp \Bigg(  \alpha \frac{|v_{\epsilon}(x) - (v_{\epsilon})_{I}|^{\frac{1}{1-s}}}{\ln^{\frac{1}{1-s}} \left( \frac{2}{\delta_{I}(x)}  \right)} \Bigg) dx - \sum_{n=0}^{n_{0}-1} \frac{\alpha^{n}}{n!} \bigintsss_{I}    \frac{|v_{\epsilon}(x) - (v_{\epsilon})_{I}|^{\frac{n}{1-s}}}{\ln^{\frac{n}{1-s}} \left( \frac{2}{\delta_{I}(x)} \right)  }  dx. 
\end{split}
\end{equation*}
Applying fractional Sobolev inequality  (see Lemma ~\ref{lemma on tau < p}) with  ~$[v_{\epsilon}]_{W^{s,p}(\Omega)}<1$ and $\frac{n}{1-s} <p$, where $1 \leq n < n_{0}$ and $n_{0} \geq p-1 >n_{0}-1$, we have
\begin{equation*}
    \bigintsss_{I}    \frac{|v_{\epsilon}(x) - (v_{\epsilon})_{I}|^{\frac{n}{1-s}}}{\ln^{\frac{n}{1-s}} \left( \frac{2}{\delta_{I}(x)} \right)  }  dx \leq C^{\frac{n}{1-s}}_{1} \int_{I} |v_{\epsilon}(x) - (v_{\epsilon})_{I}|^{\frac{n}{1-s}} dx \leq   \left(  \frac{C^{\frac{1}{1-s}}}{1-s} \right)^{n} n^{n}.
\end{equation*}
Here, we have used $\frac{1}{C_{1}} \leq \ln \left( \frac{2}{\delta_{I}(x)} \right)$ for all $x \in I$, for some $C_{1}= C_{1}(I)>0$. Using this in the previous inequality for $1 \leq n < n_{0}$ and $|I|=2$, we obtain
\begin{equation}\label{eqn1114}
\begin{split}
    \bigintsss_{I} \Phi_{n_{0}} \Bigg( &  \alpha \frac{|v_{\epsilon}(x) - (v_{\epsilon})_{I}|^{\frac{1}{1-s}}}{\ln^{\frac{1}{1-s}} \left( \frac{2}{\delta_{I}(x)}  \right)} \Bigg) \frac{dx}{\delta_{I}(x)} \\ & \geq \bigintsss_{I} \exp \left(  \alpha \frac{|v_{\epsilon}(x) - (v_{\epsilon})_{I}|^{\frac{1}{1-s}}}{\ln^{\frac{1}{1-s}} \left( \frac{2}{\delta_{I}(x)}  \right)} \right) dx - 2- \sum_{n=1}^{n_{0}-1} \frac{1}{n!}  \left(  \frac{\alpha C^{\frac{1}{1-s}}}{1-s} \right)^{n} n^{n}  .  
\end{split}
\end{equation}
 Let ~$\alpha^{*}_{1} := \gamma^{\frac{s}{1-s}}_{s}$. Fix any ~$\alpha > \gamma^{\frac{s}{1-s}}_{s}$. Since $[u_{\epsilon}]_{W^{s,p}(\mathbb{R})} \to \gamma^{s}_{s}$ as $\epsilon \to 0$ (see \eqref{conv u epsilon d=1}), for sufficiently small ~$\epsilon$, there exists $\beta>0$ such that $\alpha [u_{\epsilon}]^{\frac{-1}{1-s}}_{W^{s,p}(\mathbb{R})} \geq \beta >1$. For sufficiently small $\epsilon>0$,  we have
\begin{equation*}
\begin{split}
    \bigintsss_{I} \exp \left(  \alpha \frac{|v_{\epsilon}(x) - (v_{\epsilon})_{I}|^{\frac{1}{1-s}}}{\ln^{\frac{1}{1-s}} \left( \frac{2}{\delta_{I}(x)}  \right)} \right) dx  &\geq \bigintsss_{0}^{\epsilon} \exp \left(  \alpha \frac{|v_{\epsilon}(x) - (v_{\epsilon})_{I}|^{\frac{1}{1-s}}}{\ln^{\frac{1}{1-s}} \left( \frac{2}{1-x}  \right)} \right) dx \\ &= \bigintsss_{0}^{\epsilon} \exp \left(  \alpha [u_{\epsilon}]^{\frac{-1}{1-s}}_{W^{s,p}(\mathbb{R})} \frac{|u_{\epsilon}(x) - (u_{\epsilon})_{I}|^{\frac{1}{1-s}}}{\ln^{\frac{1}{1-s}} \left( \frac{2}{1-x}  \right)} \right) dx  \\  &\geq  \bigintsss_{0}^{\epsilon}  \exp \left(  \beta \frac{||\ln \epsilon|^{1-s} - (u_{\epsilon})_{I}|^{\frac{1}{1-s}} }{\ln^{\frac{1}{1-s}} \left( \frac{2}{1-x}  \right)} \right)  dx.
\end{split}
\end{equation*}
Again, for sufficiently small $\epsilon>0$, applying inequality ~\eqref{eqn 110} in the right hand side of the above inequality and noting that $\frac{1}{1-s}>1$,  we get
\begin{equation}\label{eqn1115}
  \bigintsss_{I} \exp \left(  \alpha \frac{|v_{\epsilon}(x) - (v_{\epsilon})_{I}|^{\frac{1}{1-s}}}{\ln^{\frac{1}{1-s}} \left( \frac{2}{\delta_{I}(x)}  \right)} \right) dx \geq   \int_{0}^{\epsilon}  \exp \left(  \beta |\ln \epsilon| \right)  dx =\epsilon^{1- \beta} \to \infty,
\end{equation}
as $\epsilon \to 0$, since $\beta>1$. Combining the inequalities \eqref{eqn1114} and \eqref{eqn1115}, we conclude that  
\begin{equation*}
     \sup \left\{  \int_{I} \Phi_{n_{0}} \left( \alpha X_{d}(u)^{\frac{1}{1-s}} \right) \frac{dx}{\delta_{I}(x)} \ \bigg| \ u \in W^{s,p}_{0}(I), \ [u]_{W^{s,p}(I)} \leq 1  \right\} =  \infty, \ \forall \ \alpha \in (\alpha^{*}_{1}, \infty). 
\end{equation*}

\bigskip

\noindent \fbox{Case $d \geq 2$} Let us consider the case ~$d \geq 2$ and ~$sp=d$.  E. Parini and B. Ruf in \cite[Proposition $5.2$]{ruf2019} using the above Moser-type functions ~\eqref{moser-type fn} established that there exists $\alpha^{*}_{d}$ as defined in ~\eqref{defn alpha star} such that for any $\alpha >  \alpha^{*}_{d}$, we have
\begin{equation}\label{eqn345}
    \sup \left\{  \int_{\Omega} \exp(\alpha |u(x)|^{\frac{d}{d-s}} ) dx \ \bigg| \ u \in \widetilde{W}^{s,p}_{0}(\Omega), \hspace{3mm} [u]_{W^{s,p}(\mathbb{R}^{d})} \leq 1  \right\} = \infty.
\end{equation}
Here, $\widetilde{W}^{s,p}_{0}(\Omega)$ denotes the completion of $C^{\infty}_{c}(\Omega)$ with respect to the norm $\|.\|_{W^{s,p}(\mathbb{R}^{d})}$. For bounded Lipschitz domains, L. Brasco, E. Lindgren, and E. Parini proved in \cite[Proposition B.1]{brasco2014} that $W^{s,p}_{0}(\Omega) = \widetilde{W}^{s,p}_{0}(\Omega)$ for $sp \neq 1$ with equivalent norms. Therefore, for any $u \in \widetilde{W}^{s,p}_{0}(\Omega)$ such that $[u]_{W^{s,p}(\mathbb{R}^{d})} \leq 1$ and $sp=d, ~ d \geq 2$, we have $u \in W^{s,p}_{0}(\Omega)$ and
\begin{equation*}
    [u]_{W^{s,p}(\Omega)} < [u]_{W^{s,p}(\mathbb{R}^{d})} \leq 1.
\end{equation*}
From Corollary ~\ref{corollary 1} and Lemma ~\ref{lemma on tau < p} with $\frac{nd}{d-s}<p$, where $1 \leq n< n_{0}$ and $n_{0} \geq p-1>n_{0}-1$, we deduce that for any $\alpha>0$,
\begin{equation*}
\begin{split}
    \int_{\Omega} \Phi_{n_{0}} \left( \alpha |u(x)|^{\frac{d}{d-s}}  \right) & \frac{dx}{\delta^{d}_{\Omega}(x)} \geq C_{1} \int_{\Omega} \Phi_{n_{0}} \left( \alpha |u(x)|^{\frac{d}{d-s}}  \right) dx \\ &  = C_{1} \int_{\Omega} \exp(\alpha |u(x)|^{\frac{d}{d-s}} ) dx - C_{1} \sum_{n=0}^{n_{0}-1} \frac{\alpha^{n}}{n!} \int_{\Omega} |u(x)|^{\frac{nd}{d-s}} dx  ,     
\end{split}
\end{equation*}
where ~$\frac{1}{\delta^{d}_{\Omega}(x)} \geq C_{1}$ for all $x \in \Omega$, for some ~$C_{1}>0$. Applying fractional Sobolev inequality for $sp=d, ~ d \geq 2$ (see Lemma \ref{lemma on tau < p})  with $[u]_{W^{s,p}(\Omega)} \leq 1$ and ~$\frac{nd}{d-s} <p$, where ~$1 \leq n < n_{0}$ and $n_{0} \geq p-1>n_{0}-1$, i.e., 
 \begin{equation}\label{eqn009}
     \int_{\Omega} |u(x)|^{\frac{nd}{d-s}} dx \leq \left(  C^{\frac{d}{d-s}} \frac{d}{d-s} \right)^{n} n^{n},
 \end{equation} 
 we get
 \begin{equation*}
 \begin{split}
   \int_{\Omega} \Phi_{n_{0}} \left( \alpha |u(x)|^{\frac{d}{d-s}}  \right) \frac{dx}{\delta^{d}_{\Omega}(x)} & \geq  C_{1} \int_{\Omega} \exp(\alpha |u(x)|^{\frac{d}{d-s}} ) dx - C_{1}|\Omega| \\ & \hspace{5mm}  - C_{1} \sum_{n=1}^{n_{0}-1} \frac{1}{n!}  \left( \alpha C^{\frac{d}{d-s}} \frac{d}{d-s} \right)^{n} n^{n}   .
 \end{split}
 \end{equation*}
Therefore, using the estimate \eqref{eqn345} with the above inequality and the fact $u \in W^{s,p}_{0}(\Omega)$ with $[u]_{W^{s,p}(\Omega)} < [u]_{W^{s,p}(\mathbb{R}^{d})} \leq 1$, we obtain that for any $\alpha> \alpha^{*}_{d}$,
\begin{equation*}
     \sup \left\{  \int_{\Omega} \Phi_{n_{0}}  \left( \alpha |u(x)|^{\frac{d}{d-s}} \right) \frac{dx}{\delta^{d}_{\Omega}(x)} \ \bigg| \ u \in W^{s,p}_{0}(\Omega), \hspace{3mm} [u]_{W^{s,p}(\Omega)} \leq 1  \right\} = \infty. 
\end{equation*}
This proves Theorem ~\ref{main result} for $d \geq 2$. \

\smallskip

\textbf{Proof of Remark:} \ Let $C=C(d,p,s, \Omega)>0$ be a uniform constant such that Lemma \ref{lemma on tau < p} for the case $d \geq 2$, Proposition ~\ref{prop} and the following inequality holds for the case $d=1$ with $sp=1$ and $\tau \geq 1$ (using Lemma \ref{lemma on tau < p}):
\begin{equation*}
\begin{split}
   \left( \bigintsss_{\Omega} \frac{|u(x) - (u)_{\Omega}|^{\tau}}{\ln^{\tau} \left( \frac{2R}{\delta_{\Omega}(x)}  \right)} dx \right)^{\frac{1}{\tau}} \leq C_{1} \left( \int_{\Omega} |u(x)-(u)_{\Omega}|^{\tau} dx  \right)^{\frac{1}{\tau}}  \leq C \tau^{\frac{d-s}{d}} & [u]_{W^{s,p}(\Omega)}, \\ & \forall \ u \in W^{s,p}_{0}(\Omega).
\end{split}
\end{equation*}
In the above inequality, we have used $\frac{1}{C_{1}} \leq \ln \left( \frac{2R}{\delta_{\Omega}(x)} \right)$ for all $x \in I$, for some $C_{1}= C_{1}(\Omega)>0$. Therefore, applying Lemma ~\ref{lemma on tau < p} for the case $d \geq 2$, the above inequality when $d=1$ and Proposition ~\ref{prop} with this constant $C= C(d,p,s, \Omega)>0$  along with $n_{0} \geq p-1>n_{0}-1$, we obtain for any $u \in W^{s,p}_{0}(\Omega)$  satisfying $[u]_{W^{s,p}(\Omega)} \leq 1$,
\begin{equation*}
\begin{split}
     \bigintsss_{\Omega} \Bigg( \sum_{n=0}^{n_{0}-1}  \frac{\alpha^{n} X_{d}(u)^{\frac{nd}{d-s}}}{n!} + &   \frac{\Phi_{n_{0}} \left( \alpha X_{d}(u)^{\frac{d}{d-s}} \right)}{\delta^{d}_{\Omega}(x)}  \Bigg)  dx \\ &  = \sum_{n=0}^{n_{0}-1} \frac{\alpha^{n}}{n!} \int_{\Omega} X_{d}(u)^{\frac{nd}{d-s}} dx +  \sum_{n=n_{0}}^{\infty} \frac{\alpha^{n}}{n!} \int_{\Omega} X_{d}(u)^{\frac{nd}{d-s}} \frac{dx}{\delta^{d}_{\Omega}(x)} \\ & \leq |\Omega| + \sum_{n=1}^{\infty} \frac{1}{n!} \left( \alpha C^{\frac{d}{d-s}}\frac{d}{d-s}  \right)^{n} n^{n}.
\end{split}
\end{equation*}
Using Stirling's approximation ~$n! \sim \sqrt{2 \pi n} \left( \frac{n}{e} \right)^{n}$ as ~$n \to \infty$, we conclude that there exists ~$\alpha_0>0$, such that
\begin{equation*}
\begin{split}
       \sup \Bigg\{  \bigintsss_{\Omega} \Bigg( \sum_{n=0}^{n_{0}-1}  \frac{\alpha^{n} X_{d}(u)^{\frac{nd}{d-s}}}{n!} +   \frac{\Phi_{n_{0}} \left( \alpha X_{d}(u)^{\frac{d}{d-s}} \right)}{\delta^{d}_{\Omega}(x)} \Bigg) dx \ \bigg|   u \in W^{s,p}_{0}(\Omega), & \ [u]_{W^{s,p}(\Omega)} \leq 1  \Bigg\} \\ &  < \infty, \ \forall \ \alpha \in [0, \alpha_{0}).
\end{split}
\end{equation*}
Also, using the fact
\begin{equation*}
    \left( \sum_{n=0}^{n_{0}-1}  \frac{\alpha^{n} X_{d}(u)^{\frac{nd}{d-s}}}{n!} + \frac{\Phi_{n_{0}} \left( \alpha X_{d}(u)^{\frac{d}{d-s}} \right)}{\delta^{d}_{\Omega}(x)} \right) \geq \frac{\Phi_{n_{0}} \left( \alpha X_{d}(u)^{\frac{d}{d-s}} \right)}{\delta^{d}_{\Omega}(x)}, \hspace{3mm} \forall \ u \in W^{s,p}_{0}(\Omega),
\end{equation*}
and the estimate ~\eqref{main ineq on alpha star} in Theorem ~\ref{main result},  we conclude that this supremum fails for any $\alpha > \alpha^{*}_d$.

\bigskip

\noindent \textbf{Open Problem:} It is easy to verify that for the case $sp=d, ~ d \geq 1$, the following inequality holds:
    \begin{equation*}
    \begin{split}
      \beta_{\delta_{\Omega}} := \sup & \left\{ \alpha >0  \  \bigg| \sup_{ [u]_{W^{s,p}(\Omega)}\leq 1 } \int_{\Omega} \Phi_{n_{0}} (\alpha X_{d}(u)^{\frac{d}{d-s}} ) \frac{dx}{\delta^{d}_{\Omega}(x)} < \infty \right\} \\ & \leq \beta := \sup \left\{ \alpha >0  \  \bigg| \sup_{ [u]_{W^{s,p}(\Omega)}\leq 1 } \int_{\Omega}\Phi_{n_{0}} (\alpha X_{d}(u)^{\frac{d}{d-s}} ) dx < \infty \right\}.   
    \end{split}
\end{equation*}
The most interesting open questions are whether there exists some $\alpha>0$ such that $\beta_{\delta_{\Omega}} < \alpha< \beta$ and the question of existence of extremal functions.
\section*{Acknowledgments}
We express our gratitude to the Department of Mathematics and Statistics at the Indian Institute of Technology Kanpur, India for providing conductive research environment. For this work, Adimurthi acknowledges support from IIT Kanpur. V. Sahu is grateful for the support received through MHRD, Government of India (GATE fellowship). This article forms a part of the doctoral thesis of V. Sahu. We would like to thank the anonymous referee, whose comments had helped us in significantly improving the results  of this article.


\begin{thebibliography}{10}

\bibitem{AdiPurbPro2023}
Adimurthi, Purbita Jana, and Prosenjit Roy, \emph{Boundary fractional Hardy's inequality in dimension one: The critical case}, arXiv:2407.12098 [math.AP] (2024).

\bibitem{adimurthi2023fractional}
Adimurthi, Prosenjit Roy, and Vivek Sahu, \emph{Fractional boundary Hardy inequality for the critical cases}, arXiv:2308.11956 [math.AP] (2024).

\bibitem{adimurthi2024submanifold}
Adimurthi, Prosenjit Roy, and Vivek Sahu, \emph{Fractional Hardy inequality with singularity on submanifold}, arXiv:2407.10863 [math.AP] (2024).

\bibitem{adi2007}
Adimurthi and K.~Sandeep, \emph{A singular Moser-Trudinger embedding and its applications}, NoDEA Nonlinear Differential Equations Appl. \textbf{13} (2007), no.~5-6, 585--603.

\bibitem{brasco2014}
Lorenzo Brasco, Erik Lindgren, and Enea Parini, \emph{The fractional Cheeger problem}, Interfaces Free Bound. \textbf{16} (2014), no.~3, 419--458.

\bibitem{calanchi2015}
Marta Calanchi and Bernhard Ruf, \emph{On Trudinger-Moser type inequalities with logarithmic weights}, J. Differential Equations \textbf{258} (2015), no.~6, 1967--1989.

\bibitem{gyula2021}
Gyula Csat\'{o}, Van~Hoang Nguyen, and Prosenjit Roy, \emph{Extremals for the singular Moser-Trudinger inequality via {$n$}-harmonic transplantation}, J. Differential Equations \textbf{270} (2021), 843--882.

\bibitem{gyula2015}
Gyula Csat\'{o} and Prosenjit Roy, \emph{Extremal functions for the singular Moser-Trudinger inequality in 2 dimensions}, Calc. Var. Partial Differential Equations \textbf{54} (2015), no.~2, 2341--2366.

\bibitem{gyula2016}
Gyula Csató and Prosenjit Roy, \emph{Singular Moser–Trudinger inequality on simply connected domains}, Communications in Partial Differential Equations \textbf{41} (2016), no.~5, 838--847.

\bibitem{di2012hitchhikers}
Eleonora Di~Nezza, Giampiero Palatucci, and Enrico Valdinoci, \emph{Hitchhikers guide to the fractional Sobolev spaces}, Bulletin des sciences math{\'e}matiques \textbf{136} (2012), no.~5, 521--573.

\bibitem{dyda2004}
Bart{\l}omiej Dyda, \emph{A fractional order Hardy inequality}, Illinois J. Math. \textbf{48} (2004), no.~2, 575--588.

\bibitem{edmunds2022}
David~Eric Edmunds and W~Desmond Evans, \emph{Fractional Sobolev spaces and inequalities}, vol. 230, Cambridge University Press, 2022.

\bibitem{lula2017}
Stefano Iula, \emph{A note on the Moser-Trudinger inequality in Sobolev-Slobodeckij spaces in dimension one}, Atti Accad. Naz. Lincei Rend. Lincei Mat. Appl. \textbf{28} (2017), no.~4, 871--884.


\bibitem{LAM2020108673}
Nguyen Lam, Guozhen Lu, and Lu~Zhang, \emph{Geometric Hardy's inequalities with general distance functions}, Journal of Functional Analysis \textbf{279} (2020), no.~8, 108673.

\bibitem{leonibook}
Giovanni Leoni, \emph{A first course in fractional Sobolev spaces}, Graduate Studies in Mathematics, vol. 229, American Mathematical Society, Providence, RI, [2023] \copyright 2023.

\bibitem{lewis}
John~L. Lewis, \emph{Uniformly fat sets}, Trans. Amer. Math. Soc. \textbf{308} (1988), no.~1, 177--196.

\bibitem{squassina2018}
Hoai-Minh Nguyen and Marco Squassina, \emph{Fractional Caffarelli-Kohn-Nirenberg inequalities}, J. Funct. Anal. \textbf{274} (2018), no.~9, 2661--2672.

\bibitem{ruf2019}
Enea Parini and Bernhard Ruf, \emph{On the Moser-Trudinger inequality in fractional Sobolev-Slobodeckij spaces}, J. Anal. Math. \textbf{138} (2019), no.~1, 281--300.

\bibitem{Peetr1966}
Jaak Peetre, \emph{Espaces d'interpolation et th\'eor\`eme de Soboleff}, Ann. Inst. Fourier (Grenoble) \textbf{16} (1966), 279--317.

\bibitem{marco2018}
Kanishka Perera and Marco Squassina, \emph{Bifurcation results for problems with fractional Trudinger-Moser nonlinearity}, Discrete Contin. Dyn. Syst. Ser. S \textbf{11} (2018), no.~3, 561--576.

\bibitem{roy2019}
Prosenjit Roy, \emph{On attainability of Moser-Trudinger inequality with logarithmic weights in higher dimensions}, Discrete Contin. Dyn. Syst. \textbf{39} (2019), no.~9, 5207--5222.

\bibitem{sahu2024weighted}
Vivek Sahu, \emph{Weighted fractional Hardy inequalities with singularity on any flat submanifold}, arxiv:2409.13050 [math.AP] (2024).

\bibitem{thin2020}
Nguyen Van~Thin, \emph{Singular Trudinger-Moser inequality and fractional $p$-Laplace equations in {$\mathbb{R}^{N}$}}, Nonlinear Anal. \textbf{196} (2020), 111756, 28.


\end{thebibliography}
\end{document}